\newcommand{\lk}[2]{{\rm lk}_{#1}(#2)}
\newcommand{\st}[2]{{\rm st}_{#1}(#2)}
\newtheorem{Lemma}{Lemma}[section]
\newtheorem{Theorem}[Lemma]{Theorem}
\newtheorem{Proposition}[Lemma]{Proposition}
\newtheorem{Corollary}[Lemma]{Corollary}
\newtheorem{Remark}[Lemma]{Remark}
\newtheorem{definition}[Lemma]{Definition}
\def\vst{\vskip .1cm}
\def\t{\tau}
\def\s{\sigma}
\def\p{\partial}
\def\mR{{\mathcal{R}}}
\def\D{\Delta}
\def\lk{lk_{\Delta}}
\def\st{st_{\Delta}}
\def\wrt{{\rm with respect to }}
\begin{document}

\title{On a construction of some homology $d$-manifolds}
\author{Biplab Basak$^1$ and Sourav Sarkar}
\date{}
\maketitle
\vspace{-10mm}
\noindent{\small Department of Mathematics, Indian Institute of Technology Delhi, New Delhi 110016, India$^2$.}

\footnotetext[1]{Corresponding author.}

\footnotetext[2]{{\em E-mail addresses:} \url{biplab@iitd.ac.in} (B.
Basak), \url{Sourav.Sarkar@maths.iitd.ac.in} (S. Sarkar).}
\begin{center}
\date{October 10, 2025}
\end{center}

\hrule

\begin{abstract}
The $g$-vector of a simplicial complex contains a lot of information about the combinatorial and topological structure of that complex. Several classification results regarding the structure of normal pseudomanifolds and homology manifolds have been established concerning the value of $g_2$. It is known that when $g_2=0$, all normal pseudomanifolds of dimensions at least three are stacked spheres. In the cases of $g_2=1$ and $2$, all homology manifolds are polytopal spheres and can be obtained through retriangulation or join operations from the previous ones. In this article, we provide a combinatorial characterization of the homology $d$-manifolds, where $d\geq 3$ and $g_2=3$. These are spheres and can be obtained through operations such as joins, some retriangulations, and connected sums from spheres with $g_2\leq 2$. Furthermore, we have presented a structural result on prime normal $d$-pseudomanifolds with $g_2=3$. 
\end{abstract}

\noindent {\small {\em MSC 2020\,:} Primary 05E45; Secondary 55U10, 57Q05, 57Q15, 57Q25.

\noindent {\em Keywords:} Homology Manifold, Normal Pseudomanifold, $g$-vector, Retriangulation of Complexes.}

\medskip
\section{Introduction}
The combinatorial characterization of certain special classes of simplicial complexes, namely homology manifolds and normal pseudomanifolds, is being developed in terms of the $g$-theorem. The variation of the third component of the $g$-vector of a $d$-dimensional simplicial complex $\D$, denoted as $g_2(\D):=f_1(\D)-(d+1)f_0(\D)+\binom{d+2}{2}$, has played a significant role in this development. Walkup's work \cite{Walkup} established that for a triangulated 3-manifold $\D$, $g_2(\D)\geq 0$, and equality holds if and only if $\D$ is a triangulation of a stacked sphere. Over time, this concept was gradually extended to encompass all normal pseudomanifolds through various studies \cite{Barnette1, Barnette2, Kalai, Fogelsanger}. Another proof of the non-negativity of $g_2$ can be found in \cite{BagchiDatta, Gromov}. Additionally, based on \cite{Kalai}, we can deduce the inequality $g_2(\D)\geq g_2(\lk \s)$, where $\D$ is a normal $d$-pseudomanifold of dimension $d\geq 3$, and $\sigma$ is a face of co-dimension three or more. As a consequence, $g_2(\D)\geq 0$ for any normal $d$-pseudomanifold $\D$. Furthermore, the equality $g_2(\D)=0$ holds if and only if $\D$ is a stacked sphere.

Nevo and Novinsky \cite{NevoNovinsky} provided a combinatorial description of homology $d$-manifolds with $g_2 = 1$, while Zheng \cite{Zheng} gave a similar characterization for the case $g_2 = 2$. Both characterizations focus on a class of simplicial complexes known in the literature as {\em prime simplicial complexes}. A pure simplicial complex $\Delta$ of dimension $d$ is said to be {\em prime} if it has no missing facets, where a {\em missing facet} refers to a $d$-dimensional simplex $\sigma$ such that $\sigma \notin \Delta$ but all of its faces belong to $\Delta$. Therefore, prime homology manifolds are the homology manifolds that do not have any missing facets.

\begin{Proposition}{\rm \cite{NevoNovinsky}}\label{Nevo}
Let $d\geq 3$, and let $\D$ be a prime homology $d$-sphere with $g_2(\D)=1$.
Then $\D$ is combinatorially isomorphic to either the join of the boundary complex of two simplices, where each simplex has a dimension of at least $2$ and their dimensions add up to $d+1$, or the join of a cycle and the boundary complex of a $(d-1)$-simplex.
\end{Proposition}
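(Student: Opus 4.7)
The plan is to proceed by induction on $d$, with Kalai's rigidity inequality $g_2(\lk(v)) \leq g_2(\D) = 1$ (valid since every vertex has codimension $d \geq 3$) as the principal tool for transferring structural information between $\D$ and its vertex links. For the base case $d = 3$, Dehn--Sommerville together with $g_2(\D) = 1$ gives $f_1 = 4 f_0 - 9$, so the average vertex degree equals $8 - 18/f_0$; one locates a vertex $v$ of small degree and analyzes its $2$-sphere link $\lk(v)$ directly, using primeness of $\D$ to rule out a connected-sum decomposition through a missing triangle, eventually forcing $\D$ to be one of the two claimed joins.

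For the inductive step $d \geq 4$, pick any vertex $v$. The link $L = \lk(v)$ is a homology $(d-1)$-sphere with $g_2(L) \in \{0,1\}$; if $g_2(L) = 0$ for every vertex $v$, then $\D$ is locally stacked and hence stacked, contradicting $g_2(\D) = 1$. So some $L$ has $g_2(L) = 1$, and by the inductive hypothesis applied to the prime summand of $L$ carrying its non-zero $g_2$ (the remaining summands being stacked, hence iterated connected sums of simplex boundaries), $L$ contains a prime piece isomorphic to $\p\s^a * \p\s^b$ with $a+b = d$ and $a,b \geq 2$, or to $C_m * \p\s^{d-2}$ for some cycle $C_m$ with $m \geq 3$.

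The main obstacle is globalization: promoting a local join decomposition at $\lk(v)$ to a global join decomposition of $\D$. My approach is via missing-face analysis --- in a join $A * B$, the minimal non-faces of the complex are precisely the minimal non-faces of $A$ together with those of $B$, and every pair consisting of a vertex of $A$ and a vertex of $B$ is an edge. Starting from the vertex bipartition dictated by the join structure of (a prime piece of) $\lk(v)$, I would extend it across $\D$ by successively analyzing the links of neighboring vertices, using Kalai's inequality once more to propagate the join structure, and invoking the primeness of $\D$ to exclude codimension-$1$ missing faces (which would otherwise yield a connected-sum decomposition of $\D$). A final case analysis on the form of $L$ then pins down $\D$ as exactly one of the two claimed joins.
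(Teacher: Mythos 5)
First, note that the paper offers no proof of Proposition \ref{Nevo}: it is quoted from Nevo and Novinsky \cite{NevoNovinsky}, so there is no internal argument to measure yours against. Judged on its own terms, your proposal is a plan rather than a proof, and the gap sits exactly where the real difficulty of the theorem lies: globalization. Knowing that some vertex link contains a prime summand of the form $\partial\sigma^a\star\partial\sigma^b$ or $C_m\star\partial\sigma^{d-2}$ does not by itself produce a join decomposition of $\Delta$; ``extend it across $\Delta$ by successively analyzing the links of neighboring vertices, using Kalai's inequality once more to propagate the join structure'' is a statement of intent, not an argument --- Kalai's inequality is numerical and cannot on its own convert local join decompositions into a global vertex bipartition, nor does the missing-face observation explain why a prime sphere all of whose links are joins must itself be a join. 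The base case $d=3$ is likewise only gestured at (``eventually forcing $\Delta$ to be one of the two claimed joins''). The actual Nevo--Novinsky argument does not propagate join structures at all: it exploits the fact that $g_2=1$ makes the stress space of $G(\Delta)$ one-dimensional, that primeness forces every vertex to participate in the generic stress (cf.\ Proposition \ref{vertex participating in stress in prime npm}), and the Cone Lemma (Proposition \ref{Cone Lemma}) to pass between stresses of $\Delta$ and of vertex links; the classification then comes from determining which graphs carry exactly one generic stress touching every vertex. To salvage your outline you would need to actually prove a lemma of the form ``if $\Delta$ is prime with $g_2(\Delta)=1$ and $\mathrm{lk}_\Delta(v)$ has a given join structure, then the corresponding partition of $V(\mathrm{lk}_\Delta(v))$ extends to $V(\Delta)$ with all cross edges present'' --- and that is essentially the whole theorem.

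Two smaller points. The reduction ``if $g_2=0$ for every vertex link then $\Delta$ is locally stacked and hence stacked'' silently invokes Kalai's theorem that locally stacked spheres of dimension at least $4$ are stacked, together with the lower bound theorem identifying $g_2=0$ with stackedness; both are true but neither is automatic and both must be cited. And when $\mathrm{lk}_\Delta(v)$ is not prime, its missing facets may or may not be faces of $\Delta$, a dichotomy your propagation scheme never addresses even though it changes the local structure of $\Delta$ around $v$.
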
 
\begin{Proposition}{\rm\cite{Zheng}}\label{Zheng}
If $d\geq 4$, and $\D$ is a prime normal $d$-pseudomanifold with $g_2(\D)=2$, then $\D$ has one of the following structures:
\begin{enumerate}[$(i)$]
\item If $g_2(\lk v)\geq 1$ for each vertex $v\in\D$, then $\D$ can be presented as $\p\s^1\star\p\s^{i}\star\p\s^{d-i}$, where $\s^j$ denote a $j$ simplex, and $2\leq i\leq (d-2)$.
\item If $g_2(\lk v)=0$ for a vertex $v\in\D$, then $\D$ is obtained through one of the following methods:
$(a)$ A central retriangulation of a stacked $d$-sphere along the union of three adjacent facets.
$(b)$ A central retriangulation of a polytopal $d$-sphere with $g_2=1$ along the union of two adjacent facets.
\end{enumerate}
\end{Proposition}

\begin{Proposition}{\rm\cite{Zheng}}\label{Zheng1}
Let $\D$ be a homology $3$-manifold with $g_2(\D)=2$. Then $\D$ has one of the following structures:
\begin{enumerate}[$(i)$]
 \item  $\D$ is an octahedral $3$-sphere, i.e., $\D=\p(a_1b_1)\star\p(a_2b_2)\star\p(a_3b_3)\star\p(a_4b_4)$,
  \item $\D$ is obtained through a central retriangulation of a polytopal $3$-sphere with $g_2 = 1$  along the union of two adjacent facets.
\end{enumerate}
\end{Proposition}

 A  homology manifold with $g_2=1$ can be expressed as the connected sum of one or more homology spheres with lower values of $g_2$, or it can be generated by repeatedly stacking prime complexes with $g_2=1$. Similarly, a homology manifold with $g_2=2$ can be constructed by taking the connected sum of one or more homology spheres with lower $g_2$ values or by being formed through repeated stacking of prime complexes with $g_2=2$. The combinatorial operation known as the connected sum yields general homology spheres (or homology manifolds) due to the following result:

 \begin{Proposition}{\rm\cite{NovikSwartz2020}}\label{NovikSwartz2020}
 If $d \geq 3$ and $\D$ is a homology $d$-manifold that is not prime, then $\D$ can be expressed either as a connected sum of prime homology manifolds or as the outcome of a handle addition on a homology manifold.
 \end{Proposition}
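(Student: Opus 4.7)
The plan is to exploit non-primality to locate an embedded combinatorial $(d-1)$-sphere inside $\Delta$ and then cut $\Delta$ along it. By assumption $\Delta$ is not prime, so there is a subset $\sigma=\{v_0,\ldots,v_d\}$ of its vertex set with $\partial\sigma\subset\Delta$ but $\sigma\notin\Delta$; the realisation $|\partial\sigma|\cong S^{d-1}$ then sits in $\Delta$ as a subcomplex. To decide whether this sphere separates $\Delta$, I would form the facet--ridge dual graph $G$ of $\Delta$, delete the $d+1$ edges that correspond to the codimension-one faces $\sigma\setminus\{v_i\}$, and examine the connectivity of the resulting graph $G'$.

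In the separating case, the components of $G'$ induce a partition $\mathcal{F}_1\sqcup\mathcal{F}_2$ of the facets of $\Delta$; taking $\Delta_j$ to be the pure subcomplex generated by $\mathcal{F}_j\cup\{\sigma\}$ manifestly exhibits $\Delta=\Delta_1\#\Delta_2$ as a connected sum along the newly introduced facet $\sigma$. In the non-separating case, I would cut $\Delta$ open along $\partial\sigma$: introduce two disjoint copies $\sigma^+,\sigma^-$ of $\sigma$ and, using connectedness of $G'$ together with a local choice of side at a single boundary ridge, propagate a consistent $\pm$-labeling of the facets incident to $\partial\sigma$ (this also splits each $v_i\in\sigma$ into $v_i^+$ and $v_i^-$). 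The resulting complex $\widetilde\Delta$ has $\sigma^+$ and $\sigma^-$ as facets, and $\Delta$ is recovered from $\widetilde\Delta$ by the handle addition identifying these two facets.

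The main obstacle is to verify that the pieces produced are genuine homology $d$-manifolds and not merely pseudomanifolds. The pseudomanifold condition follows immediately from the construction, since every ridge of $\partial\sigma$ lies in exactly one inherited facet and in one new copy of $\sigma$, while ridges outside $\partial\sigma$ are untouched. The subtle point is that every vertex in the decomposed complex(es) must retain a homology $(d-1)$-sphere as its link. I would handle this by induction on $d$: for $v_i\in\sigma$, the link ${\rm lk}_\Delta(v_i)$ is a homology $(d-1)$-manifold that inherits $\sigma\setminus\{v_i\}$ as a missing face, and the global dual-graph cut restricts to a compatible cut on the dual graph of this link, so the inductive hypothesis yields the desired sphere structure for the vertex links in the decomposed complex(es). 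The base case $d=3$ reduces to the Jordan curve theorem in $S^2$: the $3$-cycle $\partial(\sigma\setminus\{v_i\})$ separates each link $2$-sphere into two disks, each of which completes to an honest $2$-sphere after attaching the missing triangle, so the surgery at this level is automatic.
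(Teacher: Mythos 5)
The paper offers no proof of this statement: it is quoted as a black box from Novik--Swartz \cite{NovikSwartz2020}, so there is nothing in-paper to compare your argument against. Your outline does follow the standard route for such decompositions (locate a missing facet $\sigma$, cut along the embedded sphere $\p\sigma$, let the facet--ridge dual graph decide between the separating case, giving a connected sum, and the non-separating case, giving a handle), and that architecture is the right one. I will therefore assess the proposal on its own terms.

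There is a concrete gap precisely at the step you identify as the main obstacle. Your induction is run on the Proposition as stated, but for a vertex $v_i\in\sigma$ the link $\lk v_i$ is a homology $(d-1)$-\emph{sphere} with missing facet $\sigma\setminus\{v_i\}$, and the inductive hypothesis only tells you that this link is a connected sum \emph{or} a handle addition. Every subsequent step of your construction needs the stronger conclusion that $\p(\sigma\setminus\{v_i\})$ actually separates $\lk v_i$ into two pieces which close up to homology $(d-1)$-spheres after attaching $\sigma\setminus\{v_i\}$. To get that you must (a) rule out the handle alternative for homology spheres (a handle addition on a connected homology $(d-1)$-manifold with $d-1\geq 3$ satisfies $\tilde\beta_1>0$, e.g.\ because $\pi_1$ gains a free $\mathbb{Z}$ factor, so it is never a homology sphere), and (b) show via Mayer--Vietoris that both summands of a connected sum that is a homology sphere are themselves homology spheres. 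Without strengthening the inductive statement to this sphere version, the induction does not close. Secondarily, two assertions that carry most of the remaining work are stated without argument: first, that $G'$ has exactly two components in the disconnected case --- a priori deleting $d+1$ dual edges can produce up to $d+2$ components, and ruling this out already uses the links of the codimension-two faces $\sigma\setminus\{v_i,v_j\}$, which are circles containing $v_i$ and $v_j$ as non-adjacent vertices and hence pair the sides of the cut ridges consistently; second, that the two pieces of each link decomposition land on \emph{different} sides of the global cut --- if both pieces of $\lk v_i$ lay in $\mathcal{F}_1$, then in the other summand the link of $v_i$ would be the ball $\overline{\sigma\setminus\{v_i\}}$ rather than a sphere, and the alleged summand would fail to be a homology manifold. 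All of this is fillable, but it constitutes the substance of the proof rather than routine verification, so as written the proposal is a correct plan with its key lemmas still unproved.
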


It is known from \cite{Walkup} that if $\D$ is a triangulated $3$-manifold with $g_2\leq 9$, then it is a triangulated sphere. The author also offered combinatorial structures of such triangulations. In \cite{BG}, the authors rephrased the same for their purposes. In particular for homology 3-manifold with $g_2=3$, we have the following:
\begin{Proposition}[\rm \cite{BG, Walkup}]\label{main 3 dim}
Let $\Delta$ be a prime homology $3$-manifold with $g_2(\Delta)=3$. Then $\D$ is a triangulated sphere, and is obtained from a triangulated $3$-sphere with $g_2=2$ by one of the following operations:
\begin{enumerate}[$(i)$]
\item  a bistellar 2-move, 

\item an edge contraction,

\item a combination of an edge expansion and a bistellar 2-move.
\end{enumerate}
 \end{Proposition}
 
 In this article, we establish that every homology $d$-manifold with $g_2=3$ is a triangulated sphere. Additionally, we provide a combinatorial description of these triangulations. We initiated the process by considering a prime normal $d$-pseudomanifold with $g_2=3$ and proved that if all the vertex links are prime, then it can be expressed in one of the following ways: as the join of three standard spheres, generated by iteratively one-vertex suspension on a 6-vertex or 7-vertex triangulation of $\mathbb{RP}^2$, or constructed from a $(d+4)$-vertex cyclic combinatorial sphere $C^{d}_{d+4}$ for certain odd values of $d$, or can be derived from triangulated $d$-spheres having $g_2\leq 2$ through various retriangulation operations, including edge expansion, bistellar 1-move, edge contraction, and generalized bistellar 1-move.

In a related work by Zheng \cite{Zheng}, the question was posed whether $\mathbb{RP}^2\star\mathbb{S}^{d-3}$ is the only non-sphere normal $d$-pseudomanifold with $g_2=3$. Our findings affirmatively address this question when the normal pseudomanifold is prime and has exclusively prime vertex links. Below is the main theorem of interest. 



\begin{Theorem}\label{Main6}
Suppose $d\geq 4$, and let $\D$ be a prime homology $d$-manifold with $g_2(\D)=3$. Then $\D$ is a triangulated sphere and has one of the following structures:
\begin{enumerate}[$(i)$]
\item $\D$ is obtained from a triangulated $d$-sphere with $g_2=2$ by the application of a bistellar 1-move and an edge contraction,
\item $\D=\mathbb{S}^{a}_{a+2}\star\mathbb{S}^{b}_{b+2}\star \mathbb{S}^{c}_{c+2}$, where $a,b,$ and $c\geq 1$, and $d\geq 5$,
\item $\D$ is an iterated one-vertex suspension of the cyclic combinatorial sphere $C^{d}_{d+4}$; for some odd $d\geq 5$,
\item $\D$ is obtained by an edge expansion from a triangulated $d$-sphere with $g_2\leq 2$,
\item $\D$ is obtained by a generalized bistellar $1$-move from a triangulated $d$-sphere with $g_2\leq 2$.
\end{enumerate}
\end{Theorem}

The article is structured into three sections. Section 2 revisits fundamental definitions and provides essential structural tools. Section 3 focuses on a detailed study of prime normal $d$-pseudomanifolds. 
 
\section{Preliminaries}
\subsection{Normal pseudomanifolds}
All the simplicial complexes mentioned throughout the article are finite, and the simplices considered are geometric. If $\sigma$ is an $n$-simplex in $\mathbb{R}^m$ for some $m$, defined as the convex hull of $n+1$ affinely independent points $v_0, \dots, v_n$, then the simplex $\sigma$ is denoted by $v_0\dots v_n$. These points $v_0, \dots, v_n$ are called the vertices of $\sigma$, and the set of all vertices of $\sigma$ is denoted by $V(\sigma)$. If a simplex $\tau$ is the convex hull of a subset of $V(\sigma)$, then $\tau$ is called a face of $\sigma$, denoted by $\tau \leq \sigma$. Furthermore, if $\sigma = v_0\cdots v_n$, and $\t=v_0\cdots v_r$, where $r\leq n$ then by $\sigma - \t$ we mean the $(n-r-1)$-simplex $v_{r+1}\cdots v_n$. 

Let $\D$ be a simplicial complex. Any $i$-simplex present in $\D$ is referred to as an {\em $i$-face} (or simply a {\em face}) of $\D$, and a maximal simplex in $\D$ is called a {\em facet}.  We denote by $|\D|$ the union of all simplices in $\D$, together with the subspace topology induced from $\mathbb{R}^m$ for some $m$. A simplicial complex is considered {\em pure} if all its facets have the same dimension. For $0\leq i\leq 3$, we refer to the $i$-faces as vertices, edges, triangles, and tetrahedra of $\D$, respectively. We use the standard notation $\s^i$ to denote a $i$-simplex when necessary. The set of all vertices of $\D$ is denoted as $V(\D)$. The collection of faces of dimension up to $i$ in $\D$ is called the {\em $i$-skeleton} of $\D$ and is represented as $\text{Skel}_{i}(\D)$. In particular, the $1$-skeleton $\text{Skel}_{1}(\D)$ is also called the {\em graph} of $\D$ and is denoted as $G(\D)$. If $V_1\subseteq V(\D)$, then $\D[V_1]$ represents the subcomplex $\{\sigma\in\D : V(\s)\subseteq V_1\}$. Moreover, a simplex $\s$ is considered a {\em missing face} of $\D$ if $\D$ contains all proper faces of $\s$ but $\s$ itself is not a face of $\D$.

 Two simplices $\sigma = u_0u_1\cdots u_k$ and $\tau = v_0v_1\cdots v_l$ in $\mathbb{R}^n$ for some $n\in \mathbb{N}$ are called {\em independent} if the points $u_0,\dots ,u_k,v_0,\dots,v_l$ are affinely independent. In such a case, $u_0\cdots u_kv_0\cdots v_l$ is a $(k + l + 1)$-simplex and is denoted as $\sigma\star\tau$ or $\sigma\tau$. We refer to $\sigma\star\tau$ (or $\sigma\tau$) as the \textit{join} of two simplices $\sigma$ and $\tau$. Two simplicial complexes, $ \D_1$ and $ \D_2$, are independent if $\sigma\tau$ is an $(i+j+1)$-simplex for every $i$-simplex $\sigma\in \D_1$ and $j$-simplex $\tau\in \D_2$. The join $ \D_1\star \D_2$ of two independent simplicial complexes $ \D_1$ and $ \D_2$, is the simplicial complex  defined as $\{\alpha: \alpha\leq \sigma\tau,\hspace{.25cm}\text{where}\hspace{.25cm} \sigma\in \D_1\hspace{.25cm}\text{and}\hspace{.25cm} \tau\in \D_2\}$. In addition, when referring to a simplex $\s$ and a simplicial complex $\D_1$, the notation $\s\star\D_1$ represents the simplicial complex $\{\t:\t\leq\s\}\star\D_1$.  The {\em link} of a face $\sigma$ in $\D$ is defined as the simplicial complex $\{ \gamma\in \D : \gamma\cap\sigma=\emptyset$ and $ \gamma\sigma\in \D\}$ and is denoted by $\lk \sigma$. The {\em star} of a face $\sigma$ in $ \D$  is defined as the simplicial complex $\{\alpha : \alpha\leq\sigma \beta; \beta\in \lk \sigma\}$ and is denoted by $\st \sigma$. The number of vertices in $\lk u$ is referred to as the degree of that vertex and is denoted as $d(u,\D)$ or simply $d(u)$. As a notation, by $C(a_1,\dots,a_m)$, we mean a cycle of length $m$ with vertices $a_1,\dots,a_m$. If the vertex set is not specified, we shall use the notation $C$ to denote a cycle.

If $\D$ is a $d$-dimensional simplicial complex, then its face number vector, or the {\em $f$-vector}, is defined as the $(d+2)$-tuple $(f_{-1}(\D), f_0(\D), \dots, f_d(\D))$, where $f_{-1}(\D)=1$, and $f_i(\D)$ denotes the number of $i$-faces of $\D$ for $0\leq i\leq d$. Another known enumerative tool, namely $(h_0(\D), h_1(\D), \dots, h_{d+1}(\D))$, referred to as the {\em $h$-vector} of $\D$, where each $h_i(\D)$ is a linear functional of the $f$-vectors as follows:
$$ h_i(\D)=\sum_{j=0}^{i}(-1)^{i-j} \binom{d+1-j}{i-j}f_{j-1}(\D),$$
 and we define  $g_i(\D):= h_i(\D)-h_{i-1}(\D)$. In particular, $g_2(\D) = f_1(\D) - (d+1)f_0(\D) + \binom{d+2}{2}$ and $g_3(\D) = f_2 (\D)- df_1 (\D)+ \binom{d+1}{2}f_0(\D) - \binom{d+2}{3}$.
 Given two positive integers $a$ and $i$, there is a unique way to write
$$a=\binom{a_i}{i}+\binom {a_{i-1}}{i-1}+\dots+\binom{a_{j}}{j},$$
where $a_i>a_{i-1}>\dots>a_j\geq j\geq 1$. Then $a^{<i>}$, the $i$-th Macaulay pseudo-power of $a$, is defined as 
$$a^{<i>}:= \binom{a_i+1}{i+1}+\binom {a_{i-1}+1}{i}+\dots+\binom{a_{j}+1}{j+1}.$$

 In \cite{Swartz2004}, Swartz provided a relation between the $g_i$'s of all pure simplicial complexes and their vertex links. This relation was originally stated by McMullen in \cite{McMullen} for shellable complexes.

 \begin{Lemma}{\rm \cite{McMullen, Swartz2004}}\label{g-relations}
If $\D$ is a $d$-dimensional pure simplicial complex, then for every positive integer $k$,
$$ \sum_{v\in\D} g_k(\lk v)=(k+1)g_{k+1}(\D) +(d+2-k)g_k(\D).$$
\end{Lemma}

Let $\D$ be a pure $d$-dimensional simplicial complex, and let $\mathbb{F}$ be a given field. We say that $\D$ is a $d$-dimensional {\em $\mathbb{F}$-homology sphere} if the $i$-th reduced homology groups $\tilde{H}_{i}(\lk \sigma; \mathbb{F})\cong \tilde{H}_{i}(\mathbb{S}^{d-j-1}; \mathbb{F})$ for every face $\sigma\in\D$ (including the empty face) of dimension $j$. We say that $\D$ is a {\em $\mathbb{F}$-homology manifold} if all of its vertex links are $(d-1)$-dimensional $\mathbb{F}$-homology spheres. When the underlying field is specified, we simply refer to them as homology $d$-sphere and homology $d$-manifold, respectively. A pure $d$-dimensional simplicial complex, $\D$, is said to be a {\em normal $d$-pseudomanifold} if it satisfies the following: $(i)$ Every $(d-1)$-face of $\D$ is contained in exactly two facets. $(ii)$ For any two facets $\sigma$ and $\tau$ in $\D$, there exists a sequence of facets $\sigma_0,\sigma_1,\dots,\sigma_m$ in $\D$ such that $\sigma_0=\sigma$, $\sigma_m=\tau$, and $\sigma_i\cap\sigma_{i+1}$ is a $(d-1)$-simplex in $\D$. $(iii)$ The link of any face of co-dimension two or more is connected. Note that a connected homology $d$-manifold is a normal $d$-pseudomanifold.


\begin{Proposition}{\rm \cite{Swartz2014}}
Let $\D$ be a normal $d$-pseudomanifold, where $d\geq 3$. Then $g_3(\D)\leq g_2(\D)^{<2>}$.
\end{Proposition}

\begin{Corollary}\label{g3 bound}
Let $\D$ be a normal $d$-pseudomanifold, where $d\geq 3$. If $g_2(\D)=3$, then $g_3(\D)\leq 4$.
\end{Corollary}

 \begin{Lemma}\label{normal pseudomanifolds with g2 <= 2}
 Let $\D$ be a normal $d$-pseudomanifold with $g_2(\D)\leq2$. Then $\D$ is a triangulation of the $d$-sphere.
 \end{Lemma}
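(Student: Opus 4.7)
The plan is to combine induction on $f_0(\Delta)$ (for the prime / non-prime split) with a parallel induction on the dimension $d$ (to upgrade the normal-pseudomanifold hypothesis to a homology-manifold hypothesis where the cited theorems require it). The base case $g_2(\Delta)=0$ is Walkup--Kalai: such a $\Delta$ is a stacked $d$-sphere. The base case $d=3$ of the dimension induction is supplied by the classification in \cite{BasakSwartz}. So I may assume $1 \leq g_2(\Delta) \leq 2$ and $d \geq 4$.

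First, I would upgrade $\Delta$ from a normal pseudomanifold to a homology $d$-manifold. Kalai's inequality gives $g_2(\lk v) \leq g_2(\Delta) \leq 2$ for every vertex $v$ (vertices have codimension $d \geq 3$), and each $\lk v$ is a normal $(d-1)$-pseudomanifold. The inductive hypothesis on $d$ then forces every vertex link to be a $(d-1)$-sphere, so $\Delta$ is a homology $d$-manifold and Proposition \ref{NovikSwartz2020} becomes available.

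In the non-prime case, Proposition \ref{NovikSwartz2020} presents $\Delta$ as either a connected sum $\Delta_1 \# \Delta_2$ or a handle addition on some $\Delta'$. A routine $f$-vector calculation shows that connected sum is additive on $g_2$, while a handle addition raises $g_2$ by exactly $\binom{d+1}{2}$, which is at least $6$ for $d \geq 3$. The hypothesis $g_2(\Delta) \leq 2$ therefore rules out any handle addition, leaving $\Delta = \Delta_1 \# \Delta_2$ with $g_2(\Delta_i) \leq 2$ and $f_0(\Delta_i) < f_0(\Delta)$; the inductive hypothesis on the number of vertices makes each $\Delta_i$ a $d$-sphere, and hence so is their connected sum.

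In the prime case I invoke Proposition \ref{Nevo} when $g_2(\Delta)=1$ and Proposition \ref{Zheng} when $g_2(\Delta)=2$: each combinatorial type in those classifications (joins of boundary complexes of simplices with a possible cycle factor, or central retriangulations of $d$-balls sitting inside stacked or $g_2=1$ spheres) is visibly a simplicial $d$-sphere. The main obstacle is the gap between Proposition \ref{Nevo}, stated for prime homology $d$-spheres, and the present normal-pseudomanifold hypothesis; the dimension induction in the second paragraph closes this gap by showing $\Delta$ is a homology $d$-manifold, after which primality plus the $g_2=1$ constraint reduces matters to the exact hypotheses of Nevo--Novinsky. This subtlety does not arise for $g_2=2$, since Proposition \ref{Zheng} is already formulated for prime normal pseudomanifolds.
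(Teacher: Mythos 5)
Your dimension induction to upgrade $\D$ from a normal pseudomanifold to a (triangulated) homology $d$-manifold is exactly the paper's first step, and it is correct. But the rest of your argument has a genuine gap, concentrated in the prime case with $g_2(\D)=1$. At that point you have a \emph{prime homology $d$-manifold} with $g_2=1$, whereas Proposition \ref{Nevo} as stated applies only to a \emph{prime homology $d$-sphere}. The missing step --- knowing that this manifold is in fact a sphere --- is precisely the sphere-recognition content of the lemma you are proving, so your remark that ``primality plus the $g_2=1$ constraint reduces matters to the exact hypotheses of Nevo--Novinsky'' is circular: a triangulated manifold all of whose vertex links are spheres is just a triangulated manifold, and nothing you have established rules out a non-sphere topology before you invoke Proposition \ref{Nevo}. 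The paper avoids this entirely by citing the theorem of Zheng \cite{Zheng} that every homology $d$-manifold with $g_2\leq 2$ is a triangulated $d$-sphere; once the vertex links are known (by induction on $d$) to be triangulated spheres, that single citation finishes the proof, and no prime/non-prime decomposition, no induction on $f_0$, and no appeal to Propositions \ref{Nevo} or \ref{NovikSwartz2020} is needed. Your route could be repaired by replacing the Nevo--Novinsky step with the same appeal to Zheng's manifold-level theorem, but then the entire connected-sum/handle-addition analysis becomes superfluous.

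Two smaller points. First, a handle addition changes $g_2$ by $\binom{d+2}{2}$, not $\binom{d+1}{2}$ (one deletes $d+1$ vertices and $\binom{d+1}{2}$ edges, so $g_2$ increases by $(d+1)^2-\binom{d+1}{2}=\binom{d+2}{2}$); this does not affect your conclusion that handle additions are excluded, but the stated constant is wrong. Second, in the non-prime branch you should say explicitly that the connected summands $\D_1,\D_2$ are again homology manifolds with $g_2(\D_i)\leq 2$ (using additivity of $g_2$ and its non-negativity) before invoking the vertex-count induction; as written this is implicit but it is the hypothesis your induction actually needs.
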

 \begin{proof}
We prove the result by induction on the dimension $d$. The result is trivial for dimensions up to 2. Let $\D$ be a normal $3$-pseudomanifold with $g_2 (\D) \leq 2$. Then  $g_2 (\lk u)\leq 2$ for every vertex $u\in\D$. Since all the normal 2-pseudomanifolds are manifolds and triangulated 2-manifolds with $g_2\leq 2$ are triangulated 2-spheres, all the vertex links of $\D$ are triangulated $2$-spheres. Therefore, $\D$ is a $3$-manifold. Now, according to \Cref{Zheng1}, all the homology $3$-manifolds with $g_2 \leq 2$ are triangulated spheres; hence, $\D$ is a triangulated $3$-sphere.

Suppose that the result is true for the normal pseudomanifolds of dimensions up to $m$. Let $\D$ be a normal $(m+1)$-pseudomanifold with $g_2(\D) \leq 2$. Then the link of every vertex is a normal $m$-pseudomanifold, and $g_2(\lk v) \leq g_2(\D) \leq 2$ for each vertex $v$ in $\D$. By the induction hypothesis, $\lk v$ is a triangulated sphere for each vertex $v\in\D$. Therefore, $\D$ is a triangulated $(m+1)$-manifold with $g_2 (\D)\leq 2$. Hence, according to \Cref{Zheng}, $\D$ is a triangulated $(m+1)$-sphere.
 \end{proof}

It follows from the definition of normal pseudomanifold that the only $d$-dimensional normal pseudomanifold with $d+2$ vertices is the standard sphere $\mathbb{S}^d_{d+2}$, which is the boundary complex of a $(d+1)$-simplex. Various categorizations for normal pseudomanifolds are available, considering specific numbers of vertices and edges. The following particular outcome was initially presented in \cite{BagchiDatta98} and subsequently noticed in \cite{Zheng}.
\begin{Proposition}{\rm \cite{BagchiDatta98, Zheng}}\label{d+3 vertex npm}
If $\D$ is a normal $d$-pseudomanifold with $d+3$ vertices, then $\D=\p\s^{i}\star\p\s^{d+1-i}$, where $1\leq i\leq d$, and $g_2(\D)\leq 1$. Moreover, if $\D$ is prime, then $g_2(\D)=1$. 
\end{Proposition}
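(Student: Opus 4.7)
The plan is to first locate a minimal missing face of $\D$, use it to split $\D$ as a join of two simplex boundaries, and then verify the $g_2$ computation in the prime case. I begin by ruling out that $\D$ is the full $d$-skeleton of the $(d+2)$-simplex on $V(\D)$: in that case every $(d-1)$-face would lie in three facets (one for each of the remaining vertices), violating the defining property of a normal $d$-pseudomanifold. Hence $\D$ has a missing face; let $\s$ be one of minimum size, $|V(\s)|=k\ge 2$, and set $\t:=V(\D)\setminus V(\s)$, of size $d+3-k$. By minimality of $k$, every subset of $V(\D)$ of size at most $k-1$ is a face of $\D$.

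For the case $k=2$, where $\s=\{v_1,v_2\}$ is a missing edge, the vertex sets $V(\lk v_i)$ lie inside $V(\t)$ and so have at most $d+1$ vertices; since each $\lk v_i$ is a $(d-1)$-dimensional normal pseudomanifold (hence needs at least $d+1$ vertices), each equals $\p\s^d$ on $V(\t)$. Because $v_1v_2\notin\D$, every facet contains exactly one of $v_1,v_2$ or neither. The only potential ``neither" facet is $V(\t)$ itself, but it cannot occur, as each of its $(d-1)$-subfaces would then lie in three facets (two from $\st v_1\cup\st v_2$ plus $V(\t)$). Therefore $\D=\st v_1\cup\st v_2=\p\s^1\star\p\s^d$, realising $i=1$.

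For $k\ge 3$ I would induct on $d$, the base cases $d=1,2$ being routine. With no missing edges, each vertex link has full vertex set of size $d+2=(d-1)+3$, so the inductive hypothesis applied to $\lk v_1$ gives $\lk v_1=\p\alpha\star\p\beta$ for some partition of $V(\D)\setminus\{v_1\}$. A short check confirms that $V(\s)\setminus\{v_1\}$ is a minimal missing face of $\lk v_1$; after relabelling, $V(\alpha)=V(\s)\setminus\{v_1\}$ and hence $V(\beta)=V(\t)$. Repeating this at each $v_i\in V(\s)$ yields $\lk v_i=\p(V(\s)\setminus\{v_i\})\star\p V(\t)$, and these link descriptions force $\D=\p V(\s)\star\p V(\t)=\p\s^{k-1}\star\p\s^{d-k+2}$. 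In particular $V(\t)$ is also a missing face, and the no-missing-edge condition forces $|V(\t)|\ge 3$, i.e.\ $k\le d$ and $i:=k-1\in\{2,\dots,d-1\}$; combined with $k=2$, $i$ takes all values in $\{1,\dots,d-1\}$.

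For the moreover clause, the two minimal missing faces of $\p\s^i\star\p\s^{d+1-i}$ have sizes $i+1$ and $d+2-i$, so $\D$ is prime (no missing face of size $d+1$) exactly when $2\le i\le d-1$, equivalently $k\ge 3$. In that prime case the $1$-skeleton of $\D$ is complete, so $f_0=d+3$ and $f_1=\binom{d+3}{2}$; substituting into $g_2=f_1-(d+1)f_0+\binom{d+2}{2}$ yields $g_2=1$. The main obstacle I anticipate is the inductive step when $k\ge 3$: beyond identifying the correct minimal missing face inside $\lk v_1$, one must rule out ``type $m=2$" facets of $\D$ of the form $V(\t)\cup(V(\s)\setminus\{v_i,v_j\})$, which would retain all of $V(\t)$. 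For any $l\notin\{i,j\}$ such a candidate contains $v_l$, so removing $v_l$ should be a facet of $\lk v_l=\p(V(\s)\setminus\{v_l\})\star\p V(\t)$; but every facet of that join omits one vertex of $V(\t)$, delivering the needed contradiction.
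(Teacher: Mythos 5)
The paper does not actually prove Proposition~\ref{d+3 vertex npm}; it imports the statement from the cited structure theorem of Bagchi--Datta \cite{BagchiDatta98} (also observed in \cite{Zheng}), so there is no in-paper argument to compare yours against. Your proof is a correct, self-contained reconstruction of that structure theorem: the reduction to a minimal missing face $\s$, the direct treatment of a missing edge via the two vertex stars, the induction on $d$ through vertex links when $k\ge 3$, and the identification of $V(\s)\setminus\{v_1\}$ as a minimal missing face of $\lk v_1$ are all sound, and you correctly flag and dispose of the one genuinely delicate point, namely excluding candidate facets of the form $V(\t)\cup(V(\s)\setminus\{v_i,v_j\})$ that contain all of $V(\t)$ (any third vertex $v_l$ of $\s$ lies in such a set, and deleting it leaves a set containing the missing face $V(\t)$ of $\lk v_l$). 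The concluding $g_2$ computation in the prime case, $\binom{d+3}{2}-(d+1)(d+3)+\binom{d+2}{2}=1$, is also correct. Two cosmetic remarks: the claim that every subset of size at most $k-1$ is a face implicitly uses that $k\le d+2$, which follows since a set of $d+2$ vertices is never a face (and $k=d+2$ reproduces the already-excluded full $d$-skeleton); and the stated range $1\le i\le d-1$ is vacuous for $d=1$, but the paper only invokes the proposition for $d\ge 3$, so nothing is lost.
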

\begin{Lemma}\label{prime vertex links has g2>=1}
 Let $d\geq 4$ and $\D$ be a prime normal $d$-pseudomanifold such that $g_2(\D)\geq 1$. If $u$ is a vertex in $\D$ such that $\lk u$ is prime, then $g_2(\lk u)\geq 1$. 
 \end{Lemma}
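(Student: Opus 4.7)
My approach is proof by contradiction: I would assume $g_2(\lk u) = 0$ and use both primality hypotheses to collapse $\D$ to a complex that actually has $g_2 = 0$, contradicting $g_2(\D) \geq 1$.

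The first task is to pin down $\lk u$ exactly. Since $\D$ is a normal $d$-pseudomanifold with $d \geq 4$, the link $\lk u$ is a normal $(d-1)$-pseudomanifold of dimension at least $3$, so the Kalai--Fogelsanger extension of Walkup's theorem, recalled in the introduction, forces $\lk u$ to be a stacked $(d-1)$-sphere. I would then observe that the only prime stacked $(d-1)$-sphere is $\p\s^d$ itself: any single stacking operation applied to $\p\s^d$ leaves every proper face of the facet $F$ being stacked upon in place but deletes $F$, producing a missing face of dimension $d-1$, i.e.\ a missing facet. Primality of $\lk u$ therefore forces $\lk u = \p\s^d$, so that $\st u = u \star \p\s^d$ sits on a $(d+2)$-element vertex set $\{u, v_0, v_1, \ldots, v_d\}$.

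The second task uses primality of $\D$. Let $\s$ denote the $d$-simplex on $\{v_0, \ldots, v_d\}$. Every proper face of $\s$ lies in $\lk u \subseteq \D$, so either $\s \in \D$ or $\s$ is a missing facet of $\D$; primality of $\D$ rules out the second possibility, hence $\s \in \D$. A direct check of faces then shows $\st u \cup \{\s\} = \p\s^{d+1}$, because the facets of $\p\s^{d+1}$ on the vertex set $\{u, v_0, \ldots, v_d\}$ are precisely $\s$ together with the $d+1$ facets of the form $\{u\} \cup F$ that already belong to $u \star \p\s^d = \st u$. Thus $\p\s^{d+1} \subseteq \D$ as a subcomplex.

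The step I expect to be most delicate is upgrading this containment to equality. Since $\p\s^{d+1}$ is itself a closed $d$-pseudomanifold, every $(d-1)$-face of $\p\s^{d+1}$ already lies in exactly two facets of $\p\s^{d+1}$; by the pseudomanifold hypothesis on $\D$, no facet of $\D$ outside $\p\s^{d+1}$ can share a $(d-1)$-face with any facet of $\p\s^{d+1}$. Strong connectedness of $\D$ --- the existence of a chain of facets meeting along $(d-1)$-faces between any two facets --- then forbids the existence of such an outside facet, forcing $\D = \p\s^{d+1}$. But $\p\s^{d+1}$ has $g_2 = 0$, contradicting $g_2(\D) \geq 1$ and completing the proof.
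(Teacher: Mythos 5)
Your proposal is correct and follows essentially the same route as the paper's proof: assume $g_2(\lk u)=0$, use primality of $\lk u$ to force $\lk u=\p\s^d$, use primality of $\D$ to force $\s^d\in\D$, and conclude $\D=\p\s^{d+1}$ with $g_2=0$. The only difference is that you spell out the final collapse to $\p\s^{d+1}$ via the pseudomanifold and strong-connectedness conditions, which the paper asserts without detail.
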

 \begin{proof}
 Suppose that $g_2(\lk u)=0$. Then $\lk u$ is a stacked $(d-1)$-sphere. If $\lk u$ is the connected sum of the boundary complex of two or more $d$-simplices, then it contains a missing $(d-1)$-simplex, and hence $\lk u$ is not prime. Thus, $\lk u=\p\s^{d}$ for some $d$-simplex $\s^{d}$. Since $\D$ is prime, we have $\s^{d}\in\D$, and hence $\D$ is the boundary complex of a $(d+1)$-simplex. This contradicts the fact that $g_2(\D)\geq 1$. Therefore, we conclude that $g_2(\lk u)\geq 1$.
 \end{proof}
\subsection{Concept of Rigidity}
Let $G=(V,E)$ be a graph. An injective map $f:V\to\mathbb{R}^{d}$ is called a {\em $d$-embedding} of $G$ into $\mathbb{R}^d$. Let us denote the Euclidean distance in $\mathbb{R}^{d}$ by $\|.\|$. A $d$-embedding $\phi$ of $G$ is called {\em rigid} if there exists $\epsilon>0$ such that for any $d$-embedding $\psi$ of $G$ with $\|\phi(u)-\psi(u)\|<\epsilon$ for every vertex $u\in V$ and  $\|\phi(u)-\phi(v)\|=\|\psi(u)-\psi(v)\|$ for every edge $uv\in E$, one has $\|\phi(u)-\phi(v)\|=\|\psi(u)-\psi(v)\|$ for every pair of vertices $(u,v)$ in $V\times V$. A graph $G$ is called {\em generically $d$-rigid} if the set of all rigid $d$-embeddings is an open and dense subset of the set of all $d$-embeddings of $G$. 
\begin{Proposition}{\rm\cite[Gluing Lemma]{AshimowRoth,NevoNovinsky}}\label{Gluing Lemma}
If $G_1$ and $G_2$ are two generically $d$-rigid graphs such that $G_1\cap G_2$ contains at least $d$-vertices, then $G_1\cup G_2$ is a generically $d$-rigid.
\end{Proposition}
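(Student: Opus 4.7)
\textit{Proof proposal.} My plan is to translate the topological definition of generic $d$-rigidity into its infinitesimal counterpart and then combine infinitesimal flexes across the common vertices. The standard fact (due to Asimow--Roth) I would invoke is that a graph $G$ on $n$ vertices is generically $d$-rigid if and only if at a generic $d$-embedding $\phi$ every infinitesimal flex of $G$ at $\phi$ is the restriction of an ambient infinitesimal rigid motion of $\mathbb{R}^d$; equivalently, the rigidity matrix attains its full rank $dn - \binom{d+1}{2}$. With this reformulation, I would fix a generic embedding $\phi : V(G_1) \cup V(G_2) \to \mathbb{R}^d$; each restriction $\phi|_{V(G_i)}$ is then itself a generic embedding of $G_i$, hence infinitesimally rigid. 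It thus suffices to show that every infinitesimal flex $v$ of $G_1 \cup G_2$ at $\phi$ is the restriction of a single ambient rigid motion of $\mathbb{R}^d$.

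Given such a $v$, its restriction $v|_{V(G_i)}$ is an infinitesimal flex of $G_i$, so by rigidity it equals the restriction of an ambient infinitesimal rigid motion $m_i(x) = A_i x + b_i$, with $A_i \in \mathbb{R}^{d \times d}$ skew-symmetric. The two motions must agree on the common vertices $V(G_1) \cap V(G_2)$, a set of at least $d$ points. Writing $B := A_1 - A_2$ and fixing a common reference vertex $p_0$, this agreement gives $B(p - p_0) = 0$ for every other common vertex $p$. By genericity of $\phi$, any collection of $d$ common vertices spans a $(d-1)$-dimensional affine flat, so $B$ vanishes on a $(d-1)$-dimensional linear subspace $W \subset \mathbb{R}^d$. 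A skew-symmetric operator that vanishes on $W$ preserves $W^\perp$ and acts on the one-dimensional space $W^\perp$ as a scalar that is forced to be zero by skew-symmetry; hence $B = 0$. Substituting back yields $b_1 = b_2$, so $m_1 = m_2$ globally and $v$ is the restriction of one ambient rigid motion, as desired.

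The main (and essentially only) obstacle is the genericity bookkeeping: I would need to argue that a generic embedding of the union $V(G_1) \cup V(G_2)$ simultaneously restricts to a generic embedding of each $V(G_i)$ and places any $d$ of the common vertices in affinely general position. Both requirements are automatic because each notion of genericity is defined by the complement of a proper algebraic subvariety of embeddings, and such complements are preserved by projection onto any coordinate subset of vertices. Once these points are in place, the linear algebraic step above is short, and the Gluing Lemma follows.
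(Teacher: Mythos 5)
The paper does not prove this statement at all --- it is quoted verbatim from the literature (Asimow--Roth; see also Nevo--Novinsky), so there is no in-paper argument to compare against. Your proposal is the standard proof of the Gluing Lemma and it is correct: passing to infinitesimal rigidity at a generic embedding, observing that each restriction $v|_{V(G_i)}$ extends to an ambient infinitesimal isometry $m_i(x)=A_ix+b_i$, and forcing $m_1=m_2$ from agreement on $d$ affinely independent common points. The two places that genuinely need care are both handled adequately: (1) the linear-algebra step --- $B=A_1-A_2$ is skew-symmetric and kills a $(d-1)$-dimensional subspace $W$, hence preserves the line $W^\perp$ and acts on it by a scalar that skew-symmetry forces to be $0$ (equivalently, $\operatorname{rank}B\le 1$ and the rank of a skew-symmetric matrix is even); and (2) the genericity bookkeeping --- the correct phrasing is that the locus of embeddings of $V(G_1)\cup V(G_2)$ whose restriction to $V(G_i)$ is not infinitesimally rigid is the \emph{preimage under a coordinate projection} of a proper algebraic subvariety, hence itself a proper subvariety, and likewise for the locus where some $d$ common vertices are affinely dependent; a generic $\phi$ avoids this finite union. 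Your sentence ``complements are preserved by projection'' is slightly loose but the intended argument is the right one. One last pedantic point: you should note explicitly that infinitesimal rigidity at a single (generic) embedding implies rigidity there, and rigidity at a generic embedding implies generic rigidity in the open-and-dense sense used in the paper; this is exactly the Asimow--Roth equivalence you invoke at the outset, so the loop closes.
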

For a $d$-embedding $f$ of $G$, a {\em stress} of $G$ corresponding to $f$ is a function $\omega : E\to\mathbb{R}$ such that for every vertex $v\in V$,
\begin{eqnarray*}
\sum_{uv\in E} \omega(uv)[f(v)-f(u)]=0.
\end{eqnarray*}
The collection of all stresses of a graph $G$ corresponding to a given $d$-embedding $f$ forms a real vector space called the {\em stress space} of $G$ corresponding to $f$. If $G$ is a generically $d$-rigid graph, then the rigid $d$-embeddings are referred to as {\em generic maps}. In the case of $G$ being generically $d$-rigid, the dimension of the stress space is independent of the choice of generic maps. A simplicial complex $\D$ is called generically $d$-rigid if its graph $G(\D)$ is generically $d$-rigid. For a generically $d$-rigid simplicial complex $\D$, we denote the stress space of $G(\D)$ corresponding to any generic map by $\mathcal{S}(\D)$, and each element of $\mathcal{S}(\D)$ is called a generic stress of $G(\D)$.

\begin{Proposition}{\rm\cite{AshimowRoth, Kalai}}\label{g2 as dimension of stress space}
If $\D$ is a $d$-dimensional simplicial complex that is generically $(d+1)$-rigid, then $g_2(\D)$ is equal to the dimension of $\mathcal{S}(\D)$. 
\end{Proposition}
If $\omega(uv)\neq 0$ for an edge $uv\in E$, then we say that $uv$ participates in the stress $\omega$. A vertex $u$ participates in a stress $\omega$ if there exists a vertex $v$ such that the edge $uv$ participates in $\omega$.
\begin{Proposition}{\rm\cite[Cone Lemma]{TayWhiteWhiteley,Whiteley}}\label{Cone Lemma}
Let $\D$ be a simplicial complex and denote by $C(\D,v)$, the cone of $\D$ with the cone vertex $v$. Then,

\begin{enumerate}[$(i)$]
 \item $\D$ is generically $d$-rigid if and only if $C(\D,v)$ is generically $(d+1)$-rigid, and
 \item the stress spaces $\mathcal{S}(\D)$ and $\mathcal{S}(C(\D,v))$ are isomorphic real vector spaces. Moreover, if $g_2(\D)\neq 0$ then $v$ participates in a generic stress of $G(C(\D,v))$.
 \end{enumerate}
\end{Proposition}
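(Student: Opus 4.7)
The plan is to exploit a very specific $(d+1)$-embedding of $C(G,u)$ built from a $d$-embedding of $G$. Fix a generic $d$-embedding $f\colon V(G)\to\mathbb{R}^d$ and define $\tilde f$ on $V(C(G,u))$ by $\tilde f(v)=(f(v),1)$ for each $v\in V(G)$ and $\tilde f(u)=\mathbf 0$. The virtue of this choice is that the edge vector $\tilde f(v)-\tilde f(w)$ has last coordinate $0$ for every edge $vw$ of $G$, and last coordinate $\pm 1$ for every cone edge; so in the rigidity matrix $Rig(C(G,u),\tilde f)$ the cone rows are cleanly separated from the $G$-rows.

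For the stress-space isomorphism in (ii), I would read off the last coordinate of the equilibrium equation at any $v\in V(G)$: only the cone edge $uv$ contributes in that slot, forcing $\omega(uv)=0$. With every cone stress vanishing, the equilibrium at $u$ is vacuous and the first $d$ coordinates of the equations at each $v\in V(G)$ reduce exactly to the stress equations of $G$ at $f$; conversely, any stress of $G$ extends by zero on cone edges to a stress of $C(G,u)$. This yields a bijection between the stress spaces at $\tilde f$ and $f$.

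For (i), apply rank--nullity with $n=|V(G)|$ and $m=|E(G)|$. The stress isomorphism gives $\mathrm{rank}\,Rig(C(G,u),\tilde f)=\mathrm{rank}\,Rig(G,f)+n$; on the other hand, $(d+1)(n+1)-\binom{d+2}{2}-\bigl(dn-\binom{d+1}{2}\bigr)=n$, so the two maximal rigidity ranks also differ by $n$. Hence $Rig(G,f)$ attains its maximum if and only if $Rig(C(G,u),\tilde f)$ does. Because the rank of a rigidity matrix is generically maximal, attainment at any embedding upgrades to attainment at the generic one, giving both directions of (i). Upper semicontinuity of stress dimension then promotes the embedding-level stress isomorphism to an isomorphism between the left null spaces of $Rig(G,d)$ and $Rig(C(G,u),d+1)$.

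The main obstacle is the participation claim, and I would argue it by contradiction. Suppose that in a generic $(d+1)$-embedding $g$ of $C(G,u)$ the vertex $u$ participates in no stress; then every generic stress is supported on $E(G)$ and descends to a stress of $G$ in the induced generic $(d+1)$-embedding $g|_{V(G)}$. Thus the generic $(d+1)$-stress dimension of $G$ is at least $g_2(C(G,u))=g_2(G)$, the latter being a short $f$-vector computation. However, lifting a generic $d$-embedding $f$ of $G$ to $(f,0)$ in $\mathbb{R}^{d+1}$ produces a rigidity matrix whose last coordinate columns vanish identically; a generic perturbation in that direction strictly increases the rank, since for $g_2(G)\geq 1$ one has $n\geq d+2$, and the global maximum $\min\bigl(m,(d+1)n-\binom{d+2}{2}\bigr)$ strictly exceeds $dn-\binom{d+1}{2}$. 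Hence the generic $(d+1)$-stress dimension of $G$ is strictly smaller than $g_2(G)$, contradicting the previous inequality.
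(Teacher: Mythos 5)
This proposition is quoted from Tay--White--Whiteley and Whiteley; the paper gives no proof of its own, so your attempt can only be judged on its merits. The coned-embedding computation is fine: with $\tilde f(v)=(f(v),1)$ and $\tilde f(u)=\mathbf 0$, the last coordinate of the equilibrium equation at each $v\in V(G)$ does force $\omega(uv)=0$, the stress space at $\tilde f$ is isomorphic to that of $Rig(G,f)$, and the rank bookkeeping $(d+1)(n+1)-\binom{d+2}{2}-\bigl(dn-\binom{d+1}{2}\bigr)=n$ is correct. This legitimately proves one implication of (i): if $G$ is generically $d$-rigid, then $Rig(C(G,u),\tilde f)$ attains the maximal rank, hence so does the generic embedding. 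The gap is in the converse and in the claimed isomorphism of (ii). Your $\tilde f$ is a highly non-generic embedding of $C(G,u)$ (all of $V(G)$ lies on an affine hyperplane), and lower semicontinuity of rank transfers information in one direction only: rank attained at a special embedding is attained generically, but a rank deficiency at $\tilde f$ says nothing about generic embeddings. So from your construction you get only $\dim\ker_L Rig(C(G,u),d+1)\le\dim\ker_L Rig(G,d)$; the sentence ``attainment at any embedding upgrades to attainment at the generic one, giving both directions'' is a non sequitur for the reverse direction, and ``upper semicontinuity promotes the isomorphism'' yields an inequality of dimensions, not an isomorphism. The reverse inequality is the actual content of the Cone Lemma, and it requires the ingredient you omit: Whiteley's projective-invariance (radial projection) argument, which shows that an arbitrary generic embedding of the cone can be rescaled along the rays through $g(u)$ onto a hyperplane without changing the dimension of the stress space.

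There is a second, smaller gap in the participation argument. Its overall architecture is sound and in fact cleverly needs only the direction of (i) you did prove (once $C(G,u)$ is generically $(d+1)$-rigid, rank--nullity pins its generic stress dimension at exactly $g_2(G)$). But the pivotal claim that the generic $(d+1)$-stress dimension of $G$ is strictly less than $g_2(G)$ is justified by comparing the $d$-dimensional rank with the \emph{ceiling} $\min\bigl(m,(d+1)n-\binom{d+2}{2}\bigr)$; since $G$ need be neither generically $(d+1)$-rigid nor generically $(d+1)$-independent, its generic $(d+1)$-rank need not reach that ceiling, so the comparison proves nothing. The claim itself is true and can be salvaged: the extra last-coordinate equilibrium conditions define a bilinear pairing $B(\omega,h)_v=\sum_{w}\omega(vw)(h(v)-h(w))$ on (stress)$\times$(height) space, and for each $\omega\neq 0$ some indicator height $h$ satisfies $B(\omega,h)\neq 0$, so a generic $h$ cuts the stress space down by at least one dimension. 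As written, though, both the ``only if'' of (i), the isomorphism of (ii), and this step need repair.
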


\begin{Proposition}{\rm \cite{Fogelsanger,Kalai}}\label{g2 of link is bounded by g2 of complex}
Let $\D$ be a normal $d$-pseudomanifold. Then,
\begin{enumerate}[$(i)$]
 \item $\D$ is generically $(d+1)$-rigid, and
 \item for every face $\s$ of co-dimension $3$ or more in $\D$, $g_2(\lk\s)\leq g_2(\D)$.
 \end{enumerate}
\end{Proposition}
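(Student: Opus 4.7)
The plan for Part (1) is an induction on $d$, starting from the fact, due to Fogelsanger, that every normal $2$-pseudomanifold (i.e.\ every triangulated closed surface) has a generically $3$-rigid graph. This is the only genuinely rigidity-theoretic input; the rest is a formal application of Propositions~\ref{Gluing Lemma} and~\ref{Cone Lemma}. Assuming inductively that every normal $(d-1)$-pseudomanifold is generically $d$-rigid, for each vertex $v \in \D$ the link $\lk v$ is a normal $(d-1)$-pseudomanifold, and so $G(\lk v)$ is generically $d$-rigid; since $\st v$ is the cone over $\lk v$ with apex $v$, the Cone Lemma upgrades this to generic $(d+1)$-rigidity of $G(\st v)$.

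To glue the stars together, I observe that for any edge $vw \in \D$ the intersection $\st v \cap \st w$ contains $v$, $w$, and every vertex of $\lk(vw)$. Since $\lk(vw)$ is a normal $(d-2)$-pseudomanifold with $d-2\geq 1$, it has at least $d$ vertices, so $|V(\st v \cap \st w)| \geq d+2$. The Gluing Lemma, applied with rigidity dimension $d+1$, yields generic $(d+1)$-rigidity of $G(\st v \cup \st w)$. Since $G(\D)$ is connected, I fix a spanning tree and iteratively glue the stars along its edges; at each step the new star shares at least $d+2$ vertices with the running union, so the union remains generically $(d+1)$-rigid. The terminal union equals $G(\D)$ (every facet of $\D$ lies in the star of one of its vertices), completing Part (1).

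For Part (2), let $\s \in \D$ have codimension at least three and set $k = \dim(\lk \s) \geq 2$. By Part (1) (with $k=2$ handled by the Fogelsanger base case), $G(\lk \s)$ is generically $(k+1)$-rigid. Iterating the Cone Lemma once per vertex of $\s$, the graph $G(\st \s) = G(\s \star \lk \s)$ is generically $(k + 1 + |\s|) = (d+1)$-rigid, and its stress space in dimension $d+1$ is isomorphic as a real vector space to that of $G(\lk \s)$ in dimension $k+1$. Combined with Proposition~\ref{g2 as dimension of stress space} and the standard $f$-vector identity showing that $g_2$ is invariant under coning, this yields $g_2(\st \s) = g_2(\lk \s)$. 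Now fix a generic $(d+1)$-embedding $F$ of $V(\D)$, whose restriction to $V(\st \s)$ is also generic. Any stress $\omega$ on $G(\st \s)$ extends by zero to a function $\tilde\omega$ on $E(G(\D))$, which is a stress: at vertices in $V(\st \s)$ the equation is unchanged because the newly appearing edges carry weight zero, and at vertices outside $V(\st \s)$ the equation is trivially satisfied. This embeds the stress space of $G(\st \s)$ into that of $G(\D)$, so Proposition~\ref{g2 as dimension of stress space} delivers $g_2(\lk \s) = g_2(\st \s) \leq g_2(\D)$.

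The main obstacle is the base case of Part (1), namely Fogelsanger's rigidity theorem for triangulated surfaces; every subsequent step is bookkeeping with the Cone Lemma, the Gluing Lemma, and extension of stresses by zero. A minor auxiliary verification is that the Gluing Lemma hypothesis ``$\geq d+1$ shared vertices'' continues to hold when absorbing the new star into the already-glued union (it is enough that the new star share $d+1$ vertices with the previously added star through which it is attached, which is what the edge-link argument provides).
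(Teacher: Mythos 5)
The paper does not prove this Proposition at all — it is quoted as a known result with citations to Fogelsanger and Kalai — so there is nothing internal to compare against; your reconstruction is precisely the standard argument from those sources (induction on dimension with Fogelsanger's generic $3$-rigidity of triangulated closed surfaces as the base case, the Cone Lemma to pass from links to stars, the Gluing Lemma along a spanning tree using that $\st v\cap \st w$ contains at least $d+2$ vertices, and extension of stresses by zero together with cone-invariance of $g_2$ for part (2)), and it is correct. The only inputs taken on faith — Fogelsanger's surface rigidity and the fact that links of faces in a normal pseudomanifold are again normal pseudomanifolds — are exactly the ones that must be, and your vertex counts comfortably meet the $d+1$ threshold required by Proposition~\ref{Gluing Lemma} in rigidity dimension $d+1$.
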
 

\begin{Proposition}{\rm \cite{BasakSwartz,Kalai}}\label{rigid subgraph}
Suppose $\D$ is $d$-dimensional simplicial complex that is generically $(d+1)$-rigid and $\Omega$ is a $(d+1)$-rigid subcomplex of $\D$. Then $g_2(\Omega)\leq g_2(\D)$.
   \end{Proposition}
\begin{Corollary}\label{g2 bound on rigid graphs and subgraphs}  Let $d\geq 3$, and let $\D$ be a normal $d$-pseudomanifold. Then $g_2(\D[V(\st u)])\leq g_2(\D)$ for every vertex $u$ in $\D$. 
\end{Corollary}
\begin{proof}
 Observe that the link of every vertex in $\D$ is a normal $(d-1)$-pseudomanifold. Therefore, by \Cref{g2 of link is bounded by g2 of complex}, $\lk v$ is a generically $d$-rigid. Hence, by \Cref{Cone Lemma}, $\st v$ is generically $(d+1)$-rigid. Since adding additional edges to a generically $(d+1)$-rigid graph keeps the resultant graph generically $(d+1)$-rigid, the complex $\D[V(\st v)]$ is a generically $(d+1)$-rigid subcomplex of $\D$. Hence, by Proposition \ref{rigid subgraph} we have $g_2(\D[V(\st v)])\leq g_2(\D)$.
\end{proof}

\begin{Corollary}\label{number of missing edges}
 Suppose $d\geq 3$, and let $\D$ be a normal $d$-pseudomanifold and $v$ be a vertex in $\D$. If $g_2(\D)=p$ and $g_2(\lk u)=q$, then $\lk u$ has at most $p-q$ missing edges that are present in $\D$.
\end{Corollary}
\begin{proof}
  Since $\D$ is a normal $d$-pseudomanifold, from Proposition \ref{g2 of link is bounded by g2 of complex}, it follows that $g_2(\lk u)\leq g_2(\D)$. Note that $g_2(\lk u)=g_2(\st u)$. Therefore, $g_2(\st u)=q$. If $\lk u$ contains $m$ missing edges that are present in $\D$, then $g_2(\D[V(\st u)])=g_2(\st u) + m$. Using Corollary \ref{g2 bound on rigid graphs and subgraphs}, we obtain $q+m\leq p$. Hence, $m\leq p-q$. This completes the proof.
\end{proof}

\begin{Corollary} \label{no vertex outside}
 Let $\D$ be a normal $d$-pseudomanifold such that $g_2(\lk u)\geq 1$ for every vertex $u$ in $\D$. If $\Omega$ is a generically $(d+1)$-rigid subcomplex of $\D$ such that $g_2(\D)=g_2(\Omega)$, then $V(\D)=V(\Omega)$.
\end{Corollary}

\begin{proof}
Let $v$ be a vertex which is not in $\Omega$. Since $g_2(\lk v)\geq 1$, it follows from Proposition \ref{Cone Lemma} that there is a generic stress in $\st v$ whose support includes an edge incident to $v$.  The support of such a stress is not contained in $\Omega$. This contradicts the fact that  $g_2(\D)=g_2(\Omega)$. Therefore, $V(\D)=V(\Omega)$.
\end{proof}

\subsection{Structural tools to be used}

\begin{definition}
{\rm  Let $\D$ be a $d$-dimensional pure simplicial complex, and let $uv$ be an edge in $\D$. Suppose $\lk uv =\p\t$, where $\t$ is a $(d-1)$-simplex and $\t\notin\D$. Consider the simplicial complex $\D':=(\D\setminus uv \star\p(\t))\cup (\t\star\p(uv))$. We say that $\D'$ is obtained from $\D$ by a \textit{bistellar $(d-1)$-move}, and conversely, $\D$ is obtained from $\D'$ by a \textit{bistellar $1$-move}. Since $\t\notin\D$, the geometric realizations $|\D|$ and $|\D'|$ are PL-homeomorphic, and $g_2(\D)=g_2(\D')+1$. }
\end{definition}

Note that if $\D$ is obtained from $\D'$ by a bistellar $1$-move, then the geometric realizations $|\D|$ and $|\D'|$ are PL-homeomorphic. Moreover, $g_2(\D)=g_2(\D')+1$.

\begin{definition}
{\rm Let $\D$ be a $d$-dimensional simplicial complex, and let $B$ be a subcomplex of $\D$ that is a simplicial $d$-ball. The {\em central retriangulation} of $\D$ along $B$, denoted as $crtr_{B}(\D)$, is the new complex obtained by removing all of the interior faces of $B$ and then taking the cone on the boundary of $B$, where the cone point is a new vertex $u$.}
\end{definition}
\begin{definition}\label{edge expansion}
%
{\rm Let $\D$ be a $d$-manifold, and let $w$ be a vertex in $\D$. Then $\lk w$ is a $(d-1)$-sphere. Suppose $S$ is a $(d-2)$-dimensional sphere in $\lk w$ that separates $\lk w$ into two portions, $D_1$ and $D_2$, where $D_1$ and $D_2$ are $(d-1)$-dimensional balls with common boundary complex $S$, and $D_1\cup D_2=\lk w$. Consider the simplicial complex $\D':=(\D\setminus \st w)\cup(uv\star S)\cup (u\star D_1)\cup (v\star D_2)$, where $u$ and $v$ are two new vertices, and $uv$ is a new edge. Then $lk_{\D'} u \cap lk_{\D'} v= lk_{\D'} uv$, and we say that $\D$ is obtained from $\D'$ by contracting the edge $uv$ to the vertex $w$. The manifold $\D'$ is said to be obtained from $\D$ by an {\em edge expansion}.
}
\end{definition}

Let $\D$ be a normal $d$-pseudomanifold, and let $uv$ be an edge in $\D$. We say that the edge $uv$ satisfies {\em link condition} in $\D$ if $\lk u\cap\lk v=\lk uv$. In this situation, one can contract the edge $uv$, and we say that the edge $uv$ is {\em admissible for edge contraction} or simply an {\em admissible edge}. Let $\D'$ be obtained from $\D$ by contracting the edge $uv$ to a vertex. Then $\D'$ is also a normal $d$-pseudomanifold and $\D$ is obtained from $\D'$ by an edge expansion. Moreover, if either $\lk u$ or $\lk v$ is a triangulated sphere, then $|\D'|$ is PL-homeomorphic to $|\D|$ (cf. \cite{BGS1}). The relation between $g_2(\D)$ and $g_2(\D')$ depends on the number of vertices in $\lk uv$.

If there is a vertex, say $x$, in $\lk(w)$ such that $\lk(xw)$ is the $(d-2)$-dimensional sphere $S$, then the complex $\D'$ in Definition \ref{edge expansion} is obtained by performing a central retriangulation of $\D$ along $\st(xw)$ with the cone point $u$ (or by stellar subdivision at the edge $xw$ with the vertex $u$; see \cite{Swartz2009} for more details) and renaming $w$ as $v$. Thus, the central retriangulation of $\D$ along the star of an edge (or the stellar subdivision at an edge) is a special case of an edge expansion.

\begin{Remark}\label{combination of edge expansion and bistellar move}

{\rm
Let $\D$ be a $d$-manifold and $w$ be a vertex in $\D$. Suppose there exists a $(d-1)$-simplex $\alpha$ such that $\partial \alpha \subseteq \lk w$. If $\alpha \notin \D$, we retriangulate the $d$-ball $\st w$ by removing $w$ and inserting $\alpha$, then coning off the two spheres formed by $\lk w$ and $\alpha$. Let $\D'$ be the resulting complex. Then $g_2(\D')=g_2(\D)-1$ and $|\D'|\cong |\D|$.  Note that this combinatorial operation is a combination of an edge expansion and a bistellar $(d-1)$-move. Conversely,
suppose $u$ and $v$ are two vertices in $\D'$ such that $uv\not\in \D'$ and $st_{\D'}u\cap st_{\D'} v=\{\sigma: \sigma\leq\alpha\}$, where $\alpha$ is a $(d-1)$-simplex. We retriangulate the $d$-ball $st_{\D'}u\cup st_{\D'} v$ by removing $u,v,$ and $\alpha$, and coning off the boundary of $st_{\D'}u\cup st_{\D'} v$ by $w$ and we get back $\D$. Therefore, $\D$ is obtained from $\D'$ by a combination of a bistellar $1$-move and an edge contraction.
}
\end{Remark}
\begin{Lemma}\label{all non prime vertex links with 1<=g2<=2}
Let $d\geq 4$, and let $\D$ be a prime homology $d$-manifold  with $g_2(\D)=3$. If $u$ is a vertex in $\D$ such that $\lk u$ is not prime, then $\D$ is obtained from a triangulated $d$-sphere with $g_2=2$ by the application of a bistellar 1-move and an edge contraction.
\end{Lemma}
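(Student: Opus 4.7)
The plan is to invert the combined operation of Remark 2.7 with $w=u$. Non-primeness of $\lk u$, a pure $(d-1)$-dimensional complex, means it has a missing facet: a $(d-1)$-simplex $\alpha$ with $\p\alpha\subset\lk u$ but $\alpha\notin\lk u$ (equivalently $u\alpha\notin\D$). Once we establish that this $\alpha$ is also a missing face of $\D$ itself, i.e.\ $\alpha\notin\D$, all hypotheses of Remark 2.7 are met, and the remark directly produces the required complex $\D'$ together with the recovery operation.

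Showing $\alpha\notin\D$ is the crucial step and the only place where primeness of $\D$ enters. Suppose for contradiction that $\alpha\in\D$. Then every $(d-1)$-face of the $d$-simplex $u\alpha$ lies in $\D$: the face $\alpha$ itself by assumption, and for each $(d-2)$-face $\alpha'\subsetneq\alpha$, the face $u\alpha'$, which is in $\D$ because $\alpha'\in\p\alpha\subset\lk u$. Hence $\p(u\alpha)\subset\D$ while $u\alpha\notin\D$, so $u\alpha$ would be a missing facet of $\D$, contradicting primeness. Thus $\alpha\notin\D$.

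With $\alpha\notin\D$ in hand, we observe that $\p\alpha$ is a $(d-2)$-dimensional subcomplex of the homology $(d-1)$-sphere $\lk u$, so a standard Mayer--Vietoris/Alexander-duality argument splits $\lk u$ into two homology $(d-1)$-discs $D_1,D_2$ with common boundary $\p\alpha$. Applying Remark 2.7 we retriangulate $\st u$ by removing $u$, inserting $\alpha$, and coning off the two $(d-1)$-spheres $D_1\cup\alpha$ and $D_2\cup\alpha$ with new vertices. The resulting complex $\D'$ is PL-homeomorphic to $\D$, is again a homology $d$-manifold, and satisfies $g_2(\D')=g_2(\D)-1=2$. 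By the converse clause of Remark 2.7, $\D$ is recovered from $\D'$ by a bistellar $1$-move followed by an edge contraction, which is exactly the conclusion of the lemma. The only non-routine ingredient is the primeness step ruling out $\alpha\in\D$; everything else is a direct invocation of Remark 2.7.
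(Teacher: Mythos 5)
Your proposal is correct and follows essentially the same route as the paper: identify a missing $(d-1)$-face $\alpha$ of $\lk u$, use primeness of $\D$ to show $u\alpha$ would otherwise be a missing facet so that $\alpha\notin\D$, and then invoke Remark \ref{combination of edge expansion and bistellar move} to produce $\D'$ with $g_2(\D')=2$ and recover $\D$ by a bistellar $1$-move and an edge contraction. The only cosmetic difference is that the paper obtains the missing facet and the splitting of $\lk u$ into two homology spheres from the connected-sum decomposition of Proposition \ref{NovikSwartz2020}, whereas you argue directly from the definition of primeness and sketch the separation via Alexander duality; the substance is the same.
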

\begin{proof}
It follows from Proposition \ref{NovikSwartz2020} that $\lk u=\D_1\#\D_2$, where $\D_1$ and $\D_2$ are homology $(d-1)$-spheres ($\D_1$ or $\D_2$ need not be prime). Let $\t$ be the $(d-1)$-simplex in $\D_1\cap\D_2$. Since $\t\notin\lk u$ and $\D$ is prime, we have $\t\notin\D$. Now retriangulate $\st u$ by removing $u$ and inserting $\t$ and then coning off the two homology spheres formed by $\lk u$ and $\t$. Let $\D'$ be the resulting complex. Then $g_2(\D')=g_2(\D)-1=2$. By Remark \ref{combination of edge expansion and bistellar move}, $|\D'|\cong |\D|$ and $\D$ is obtained from $\D'$ by a combination of a bistellar $1$-move and an edge contraction. Since $g_2(\D')=2$, from \Cref{normal pseudomanifolds with g2 <= 2} it follows that $\D'$ is a triangulated sphere. This completes the proof.
\end{proof}


In \cite[Section 5]{BagchiDatta}, a general version of the bistellar move is defined. Here we define a specific generalized bistellar move to serve our purpose.  
\begin{definition}
{\rm Let $\D$ be a $d$-dimensional pure simplicial complex and $uv$ be an edge in $\D$. Suppose $\lk uv =\p B$, where $B= \p (xy)\star \t$, $\t$ is a $(d-2)$-simplex, and $\t\notin\D$. Consider the simplicial complex $\D':=(\D\setminus uv\star\p B)\cup ( \p(uv)\star B)$. We say that $\D'$ is obtained from $\D$ by a \textit{generalized bistellar $(d-1)$-move}, and conversely, $\D$ is obtained from $\D'$ by a \textit{generalized bistellar $1$-move}. Since $\t\notin\D$, $|\D|$ and $|\D'|$ are PL-homeomorphic, and $g_2(\D)=g_2(\D')+1$. }
\end{definition}

The suspension of a surface, or more generally, a simplicial complex, is a well-known operation that involves the introduction of two new vertices. Another operation, first introduced in \cite{BagchiDatta}, is referred to as the one-vertex suspension, where only one additional vertex is added instead of two.
\begin{definition}\label{one-vertex  suspension}
{ \rm Let $\D$ be a normal $(d-1)$-pseudomanifold and $v$ be a vertex in $\D$. Consider the  normal $d$-pseudomanifold $\sum_{v,u}\D:=(v\star\{\tau\in\D : v\nleq\tau\})\cup(u\star\D)$, where $u$ is a new vertex.  The complex $\sum_{v,u}\D$ is called the \textit{one-vertex suspension} of $\D$ \wrt the vertex $v$.}
\end{definition}

The geometric realization  of $\sum_{v,u}\D$ is the suspension of $|\D|$. In the one-vertex suspension, $g_2$ values of $\sum_{u,v}\D$ and the vertex links change based on the degree of the vertex $v$. If $v$ is not adjacent to $m$ vertices of $\D$, then $g_2(\sum_{u,v}\D)=g_2(\D)+m$. Since the links of the vertices $u$ and $v$ in $\sum_{u,v}\D$ are combinatorially isomorphic to $\D$, we have  $g_2(lk_{\sum_{u,v}\D} u)=g_2(lk_{\sum_{u,v}\D} v)=g_2(\D)$. Direct computation reveals that the $g_2$ value of any other vertex link, say $lk_{\sum_{u,v}\D}x$, can be computed using the formula $g_2(lk_{\sum_{u,v}\D} x)=g_2(\lk x)+f_0(\lk x-\st xv)$.

 \begin{Lemma}\label{missing edge}
Let $d\geq 4$, and let $\D$ be a normal $d$-pseudomanifold with $g_2(\D)=3$. Let $u$ be a vertex in $\D$ such that $\lk u=C\star\p\s^{d-2}$, where $C$ is an $n$-cycle, $n\geq 4$. If either $\s^{d-2}\notin\D$, or $C$ contains a vertex that is not incident to a diagonal edge in $\D[V(C)]$, then $\D$ can be obtained from a triangulated $d$-sphere with $g_2 \leq 2$ by an edge expansion.
\end{Lemma}
\begin{proof}
If $\s^{d-2}\notin\D$, then for every vertex $v\leq \s^{d-2}$, the edge $uv$  is an admissible edge. Now, consider the case where $C$ contains a vertex not incident to a diagonal edge in $\D[V(C)]$. Let $v_1$ be such a vertex in $C$. Then, $\lk u\cap\lk v_1=\lk uv_1=\p\s^{d-2}\star\p(u_1u_2)$ for some vertices $u_1$ and $u_2$ in $C$. Thus, the edge $uv_1$ is also admissible. Let $\D'$ be the complex obtained after applying an edge contraction to an admissible edge. Then, $g_2(\D')\leq 2$, and $\D$ can be obtained from $\D'$ through an edge expansion. Now, the result follows from Lemma \ref{normal pseudomanifolds with g2 <= 2}.
\end{proof}

 \begin{Lemma}\label{d>4,g2=1}
Let $d \geq 4$, and let $\D$ be a prime normal $d$-pseudomanifold with $g_2(\D) = 3$ such that $\lk v$ is prime for every vertex $v \in \D$. Suppose $u$ is a vertex in $\D$ such that $\lk u = C \star \p\s^{d-2}$, where $C$ is a cycle of length $n \geq 4$. Then, the following two conditions cannot hold simultaneously:
\begin{enumerate}[$(i)$]
\item $\s^{d-2} \in \D$, and
\item every vertex of $C$ is incident to at least one diagonal edge in $\D[V(C)]$.
\end{enumerate}
\end{Lemma}
\begin{proof}
Observe that $g_2(\lk u) = 1$. Therefore, by Corollary \ref{number of missing edges}, $\lk u$ can have at most two missing edges that are present in $\D$.

Suppose both $(i)$ and $(ii)$ hold simultaneously. Since every vertex of $C$ is incident to at least one diagonal edge in $\D[V(C)]$, it follows that $n = 4$. Moreover, there are exactly two missing edges in $\lk u$ that are present in $\D$. Let $e_1$ and $e_2$ be these two edges. Then, $G(\D[V(\lk u)])=G(\lk u)\cup\{e_1,e_2\}$. Furthermore, since $g_2(\lk u) = g_2(\st u) = 1$, we have $g_2(\D) = g_2(\D[V(\st u)])$. By Corollary \ref{no vertex outside}, it follows that $V(\st u) = V(\D)$.

Since $\s^{d-2}\in\D$ and $\s^{d-2}\notin \lk u$, it follows from $V(\st u) = V(\D)$ that $V(\lk \s^{d-2}) \subseteq V(C)$. However, for any vertex $z \in V(\lk \s^{d-2}) \cap V(C)$, we have $\p(u \star \s^{d-2}) \subseteq \lk z$. Since $u \star \s^{d-2} \notin \D$, the link of $z$ in $\D$ is not prime, which leads to a contradiction. Hence, both $(i)$ and $(ii)$ cannot hold simultaneously.
\end{proof}
 
 \section{Proof of the Main Theorem}
In this section, we present the proof of the main result, Theorem~\ref{Main6}, along with the intermediate results required for its proof. The primary focus is on normal pseudomanifolds of dimension at least 4. The discussion begins with normal $d$-pseudomanifolds $\Delta$ satisfying $g_2(\Delta) = 3$ and $g_2(\lk v) = 3$ for every vertex $v$ in $\Delta$. In Lemma~\ref{prime normal 4pseudomanifold}, we show that normal 4-pseudomanifolds do not exhibit this property. Moreover, for $d \geq 5$, a combinatorial description of such normal $d$-pseudomanifolds is provided in Lemma~\ref{d+4 vertex g2=3}.

We then consider the class $\mathcal{R}_d$ of all prime normal $d$-pseudomanifolds $\Delta$, as defined in Definition~\ref{DefHa}, in which at least one vertex $v$ satisfies $g_2(\lk v) \leq 2$. For $\Delta \in \mathcal{R}_d$, combinatorial descriptions are given when $\Delta$ contains a vertex $u$ with $g_2(\lk u) = 2$, as detailed in Lemmas~\ref{d>4,g2=2}, \ref{d>4,g2=2, one edge with g2=0, three adjacent facet}, \ref{d>4,g2=2, one edge with g2=0, two adjacent facet}, and \ref{octahedral 2 sphere, d=4, g2=2}, using specific characterizations of $\lk u$ described in Propositions~\ref{Zheng} and \ref{Zheng1}. On the other hand, Lemma~\ref{d>4, g2=1, D=join}, together with Lemma~\ref{missing edge}, addresses the case where $\Delta$ contains a vertex $u$ with $g_2(\lk u) = 1$. The proof of Lemma~\ref{d>4, g2=1, D=join} relies on almost every preceding result in this section. The main result of this section is Theorem~\ref{Main5}, which provides a combinatorial characterization of prime normal $d$-pseudomanifolds with prime vertex links and $g_2 = 3$. This result is then used to prove the main theorem of the article.





 \begin{definition}{\rm \cite{Grunbaum}}
{\rm Consider the moment curve $M^{d+1}$ in $\mathbb{R}^{d+1}$ defined parametrically by $x(t) = (t, t^2, \ldots, t^{d+1})$. Let $t_1, t_2, \ldots, t_n$ be real numbers indexed in increasing order, and define $v_i = x(t_i)$ for $1 \leq i \leq n$. For $n \geq d + 2$, let $V$ be the set $\{v_1, v_2, \ldots, v_n\}$, and denote by $C(n, d+1)$ the convex hull of $V$. Then $C(n, d+1)$ is a simplicial convex $(d + 1)$-polytope. The boundary complex of $C(n, d+1)$ is called the cyclic $d$-sphere and is denoted as $C^d_{n}$.}
\end{definition}

Note that, both $C(n, d+1)$ and $C^d_{n}$ are $\lfloor\frac{d+1}{2}\rfloor$-neighborly, and a set $U$ of $(d + 1)$ vertices spans a $d$-face of $C(n, d + 1)$ if and only if any two points in $V \setminus U$ are separated in $M^{d+1}$ by an even number of points from the set $U$. In $C^{2c+1}_ {m+1}$, the link of a vertex is isomorphic to $C^{2c}_m$.

\begin{definition}
{\rm A normal $d$-pseudomanifold $\D$ is called irreducible if $f_0(\D)>d+2$, and $\D$ cannot be written in the form $\D= X\star \mathbb{S}^{c}_{c+2}$, where $X$ is a normal pseudomanifold and $c\geq 0$. We say $\D$ is completely reducible if it is the join of one or more standard spheres.}
\end{definition}
\begin{Proposition}{\rm \cite[Theorems 1 and 2]{BagchiDatta98}}\label{reducible}
Let $\D$ be an irreducible normal $d$-pseudomanifold  with $d+4$ vertices. Then $\D$ is either the cyclic combinatorial sphere $C^{d}_{d+4}$ for some odd $d$, or it can be obtained as an iterated one-vertex suspension of either the $6$-vertex real projective space $\mathbb{RP}_6^{2}$ or the cyclic combinatorial sphere $C^{d'}_{d'+4}$, where $d'$ is odd.
\end{Proposition}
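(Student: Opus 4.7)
The plan is to proceed by induction on $d$, with the one-vertex suspension serving as the central structural tool.

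The base cases handle $d=1$ and $d=2$ by direct enumeration. A normal $1$-pseudomanifold on $5$ vertices is the pentagon $C^1_5$, and since $1$ is odd this matches case~(ii). For $d=2$, an irreducible normal $2$-pseudomanifold on $6$ vertices is, by the classical census of small triangulated surfaces, exactly $\mathbb{RP}_6^{2}$; every $6$-vertex triangulated $2$-sphere admits an $\mathbb{S}^0$-join factor and is hence reducible, so no cyclic sphere appears in even dimension $2$.

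For the inductive step with $d\geq 3$, take an irreducible normal $d$-pseudomanifold $\D$ on $d+4$ vertices. First I would inspect each vertex link $\lk v$, which is a normal $(d-1)$-pseudomanifold on at most $(d-1)+4$ vertices; by Proposition~\ref{d+3 vertex npm} and the induction hypothesis, the combinatorial type of $\lk v$ is tightly constrained to a short list (a standard sphere, a join of two boundary complexes, or an iterated one-vertex suspension of $\mathbb{RP}_6^{2}$ or $C^{d'}_{d'+4}$). Next I would locate a pair of non-adjacent vertices $x,y\in\D$; such a pair exists whenever the $1$-skeleton of $\D$ is not complete. Given such a pair, I would select a candidate pivot $v\in V(\D)\setminus\{x,y\}$ and verify two things: that $\st x \cup \st y = \D$, and that $\lk x$ and $\lk y$ match up as the two halves of a one-vertex suspension $\Sigma_v K$ over some normal $(d-1)$-pseudomanifold $K$ on $d+3$ vertices. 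Any standard-sphere join factor of $K$ would lift to one of $\D$, contradicting irreducibility, so $K$ is itself irreducible, and the induction hypothesis identifies $K$ and therefore $\D$ as an iterated one-vertex suspension of $\mathbb{RP}_6^{2}$ or of $C^{d'}_{d'+4}$ for some odd $d'$.

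The main obstacle is the opposite extreme, where $\D$ has complete $1$-skeleton and no non-adjacent pair is available, so the suspension route collapses. Here the plan is to argue directly that $\D$ must coincide with the cyclic sphere $C^d_{d+4}$: the $\lfloor (d+1)/2\rfloor$-neighbourliness forced by $f_0(\D)=d+4$ together with the face-number identities of a normal $d$-pseudomanifold pin down the $f$-vector of $\D$ to that of the boundary of $C(d+4,d+1)$, and then Gale's evenness condition determines the combinatorial type uniquely. The parity restriction on $d$ emerges from the pseudomanifold axiom that every $(d-1)$-face lies in exactly two facets: in even dimension the candidate facet structure forces a splitting $\D = X\star \mathbb{S}^c_{c+2}$ and violates irreducibility. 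A secondary technical burden throughout the inductive step is verifying, at each suspension reconstruction, that the identification $\lk x \cong \lk y$ is canonical in a manner compatible with the pivot choice $v$; this should follow from connectivity of codimension-two links but warrants careful bookkeeping.
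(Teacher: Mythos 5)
First, a point of reference: the paper does not prove this statement at all. Proposition~\ref{reducible} is imported verbatim from Theorems~1 and~2 of Bagchi--Datta \cite{BagchiDatta98}, so there is no in-paper argument to compare yours against; your sketch has to be judged against the cited source, whose proof is a careful analysis of the complement of the $1$-skeleton (with $f_0=d+4$ and $g_2\geq 0$ there are at most three missing edges) combined with a separate classification of the ``primitive'' complexes that are neither joins nor one-vertex suspensions.

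Measured against that, your inductive step rests on a reversed criterion for detecting one-vertex suspensions, and this breaks both branches of your dichotomy. In $\sum_v K$ the two new vertices $x,y$ are \emph{adjacent}, and each is adjacent to every other vertex (its link is a copy of $K$ on all remaining vertices); they are universal vertices of the graph. A \emph{non-adjacent} pair $x,y$ with $\st x\cup\st y=\D$ and $\lk x=\lk y$ instead exhibits $\D$ as the join $\mathbb{S}^0_2\star\lk x$, i.e.\ it certifies reducibility, which is excluded by hypothesis --- so the non-adjacent-pair branch can never produce the suspension structure you want. Conversely, since one-vertex suspension adds universal vertices and never destroys adjacencies among the old ones, every iterated one-vertex suspension of the $2$-neighbourly complexes $\mathbb{RP}^2_6$ and $C^{d'}_{d'+4}$ has a complete $1$-skeleton for $d\geq 3$; all of these land in your ``complete graph'' branch, where you claim Gale evenness forces $\D=C^d_{d+4}$. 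That is false: the iterated one-vertex suspensions of $\mathbb{RP}^2_6$ are $2$-neighbourly normal $d$-pseudomanifolds on $d+4$ vertices that are not spheres, and the even-dimensional cyclic spheres (which are one-vertex suspensions of odd ones) also live there. The correct trigger is the opposite of yours: one looks for a universal vertex $x$ and a vertex $y\in\lk x$ such that every facet contains $x$ or $y$, which identifies $\D=\sum_y(\lk x)$. Your base case $d=2$ has the same confusion in miniature: the $6$-vertex stacked $2$-spheres have no $\mathbb{S}^0$-join factor, hence are irreducible in the paper's sense, and they appear in the conclusion precisely as one-vertex suspensions of $C^1_5$ --- so it is not true that $\mathbb{RP}^2_6$ is the only irreducible $6$-vertex surface, nor that every $6$-vertex $2$-sphere is reducible.
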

\begin{Lemma}\label{d+4 vertex g2=3}
    Let $d\geq 3$, and let $\D$ be a normal $d$-pseudomanifold with $g_2(\D)=3$. If $f_0(\D)=d+4$, then the graph $G(\D)$ is complete.
\end{Lemma}
\begin{proof}
    If the graph $G(\D)$ is not complete, then $f_1(\D)<{d+4\choose 2}$. Therefore,
    
    $$
        g_2(\D)=f_1(\D)-(d+1)f_0(\D)+{d+2\choose 2}<3.
   $$
Hence, the result follows.
\end{proof}
\begin{Lemma}\label{prime normal 4pseudomanifold}
Let $\D$ be a normal $4$-pseudomanifold with $g_2(\D)= 3$. Then there is a vertex $v$ in $\D$ with $g_2(\lk v)\leq 2$.
\end{Lemma}
\begin{proof}  
Given that $\D$ is a $4$-manifold, a beautiful result of Klee \cite{Klee} ( see also \cite{NovikSwartz2009}), known as the
Dehn–Sommerville relations, asserts that $$h_{5-i}(\D)-h_i(\D)=(-1)^{i}\binom{5}{i}(\chi(\D)-\chi(\mathbb{S}^4)).$$
In particular, for $i=2$, we obtain $$h_3(\D)-h_2(\D)=10(\chi(\D)-2),$$ which implies that $g_3(\D)$ is an integer multiple of 10. On the other hand, Corollary \ref{g3 bound} implies that $g_3(\D)\leq 4$. Combining these two results, we have $g_3(\D)\leq 0$. Now, referring to Lemma \ref{g-relations}, we conclude that there must be a vertex in $\D$, say $v$, with $g_2(\lk v)\leq 1$.

Suppose $\D$ is a normal $4$-pseudomanifold that contains a vertex with a non-sphere link such that $g_2(\lk u) = 3$ for every vertex $u\in\D$. From Corollary \ref{g3 bound} we have $g_3(\D)\leq 4$. Therefore, from Lemma \ref{g-relations}, we get $3f_0(\D)\leq 3\cdot 4+4\cdot 3$, i.e., $f_0(\D)\leq 8$. Note that the only 6-vertex normal 4-pseudomanifold is the boundary complex of a 5-simplex, and by \Cref{d+3 vertex npm}, the 7-vertex normal 4-pseudomanifold has a $g_2$ value of at most 1. Therefore, $f_0(\D)=8$.

Suppose $\D$ is reducible, and let $\D=X\star\mathbb{S}^{c}_{c+2}$, where $X$ is a normal pseudomanifold and $c\geq 0$. By a similar argument as in Lemma \ref{d>4, all g2=3} we get $c\geq 2$, and it follows that the dimension of $X$ is at most one. Thus $g_2(\D)=g_2(X\star\mathbb{S}^{c}_{c+2})\leq 1$, which is a contradiction. Therefore, $\D$ is irreducible. Moreover, since $f_0(\D)=8$ and $\D$ is not a triangulated sphere, Proposition \ref{reducible} implies that $\D$ is obtained from $\mathbb{RP}^2_6$ by two times one-vertex suspension. However, the resulting complex in two times one-vertex suspension of $\mathbb{RP}^2_6$ will have at most four vertices with a link that has a $ g_2$ value of 3. This contradicts the hypothesis that $g_2(\lk u) = 3$ for every vertex $u\in\D$. Thus, we can conclude that $\D$ contains at least one vertex, say $v$, with $g_2(\lk v)\leq 2$.
\end{proof}

\begin{Lemma}\label{d>4, all g2=3}
Let $d\geq 5$, and let $\D$ be a normal $d$-pseudomanifold with $g_2(\D)=3$. Suppose $g_2(\lk u)=3$ for every vertex $u\in\D$. Then  $\D$ is one of the following:
\begin{enumerate}[$(i)$]
\item $\D=\mathbb{S}^{a}_{a+2}\star\mathbb{S}^{b}_{b+2}\star \mathbb{S}^{c}_{c+2}$, where $a,b,$ and $c\geq 2$,
\item $\D$ is the cyclic combinatorial sphere $C^{d}_{d+4}$ for some odd $d$,
 \item $\D$ is an iterated one-vertex suspension of either the $6$-vertex real projective space $\mathbb{RP}_6^{2}$, or the cyclic combinatorial sphere $C^{d'}_{d'+4}$, where $d'$ is odd.
 \end{enumerate}
\end{Lemma}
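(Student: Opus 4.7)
\medskip\noindent\textbf{Proof proposal.} The plan is to show that $\Delta$ is $1$-neighbourly and has exactly $d+4$ vertices, and then to invoke the classifications of $(d+4)$-vertex irreducible normal pseudomanifolds (Proposition \ref{reducible}) and of $(d+3)$-vertex ones (Proposition \ref{d+3 vertex npm}) to read off the three cases.

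The decisive first step is to prove that $V(\st v)=V(\Delta)$ for every vertex $v\in\Delta$. Since $\lk v$ is a normal $(d-1)$-pseudomanifold with $d-1\ge 4$, it is generically $d$-rigid, and the Cone Lemma (Proposition \ref{Cone Lemma}) gives that $\st v=C(\lk v,v)$ is generically $(d+1)$-rigid with a $3$-dimensional stress space, equal in dimension to that of $\lk v$. Extending any stress of $\st v$ by zero on $E(\Delta)\setminus E(\st v)$ produces a stress of $\Delta$, yielding an injection from the $3$-dimensional stress space of $\st v$ into the $3$-dimensional stress space of $\Delta$; by equality of dimensions the injection is onto, so every stress of $\Delta$ is supported on the edges of $\st v$. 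If some $u\in V(\Delta)\setminus V(\st v)$ existed, then no edge incident to $u$ would carry a nonzero stress of $\Delta$, contradicting Proposition \ref{vertex participating in stress in prime npm}. Hence every $v$ is adjacent in $\Delta$ to every other vertex, so $G(\Delta)$ is complete.

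Substituting $f_1(\Delta)=\binom{f_0}{2}$ into $g_2(\Delta)=3$ yields $(f_0-(d+4))(f_0-(d-1))=0$, and since any normal $d$-pseudomanifold has at least $d+2$ vertices we conclude $f_0(\Delta)=d+4$. Proposition \ref{reducible} now applies. If $\Delta$ is irreducible, then $\Delta$ is either $C^{d}_{d+4}$ for some odd $d$, or an iterated one-vertex suspension of $\mathbb{RP}^{2}_{6}$ or of $C^{d'}_{d'+4}$ with $d'$ odd; these are cases (ii) and (iii). If $\Delta$ is reducible, write $\Delta=X\star\mathbb{S}^{c}_{c+2}$ with $X$ a homology manifold; then $f_0(X)=\dim X+3$, so Proposition \ref{d+3 vertex npm} gives $X=\partial\sigma^{i}\star\partial\sigma^{j}$, and hence $\Delta=\mathbb{S}^{a}_{a+2}\star\mathbb{S}^{b}_{b+2}\star\mathbb{S}^{c}_{c+2}$ for some $a,b,c\ge 0$. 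The hypothesis $g_2(\lk v)=3$ for every $v$ forces $a,b,c\ge 2$: a vertex in an $\mathbb{S}^1$ factor would have link $\mathbb{S}^{0}\star\mathbb{S}^{b}\star\mathbb{S}^{c}$ (easily seen to have $g_2=2$), and a vertex in an $\mathbb{S}^0$ factor would have link $\mathbb{S}^{b}\star\mathbb{S}^{c}$ (with $g_2=1$). This gives case (i).

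The main obstacle is the rigidity argument in the second paragraph: converting the numerical hypothesis $g_2(\lk v)=g_2(\Delta)=3$ into the combinatorial statement that $v$ is universally adjacent. The Cone Lemma together with Proposition \ref{vertex participating in stress in prime npm} provides the bridge, but care is needed in verifying that the extension-by-zero of a stress on $\st v$ actually lies in the stress space of $\Delta$ and that the two stress spaces have equal dimension, so that the supports of all stresses of $\Delta$ are trapped inside $\st v$. Once $G(\Delta)$ is complete the rest is a short quadratic and an appeal to the classifications.
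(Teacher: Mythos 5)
Your proposal is correct, and its second half (deducing $f_0(\D)=d+4$, then splitting into the irreducible case handled by Proposition \ref{reducible} and the reducible case handled by Proposition \ref{d+3 vertex npm}, and finally forcing $a,b,c\geq 2$ by computing $g_2$ of vertex links in small join factors) coincides with the paper's endgame. Where you genuinely diverge is the opening counting step. The paper bounds the vertex count by feeding $g_2(\lk u)=3$ into the identity $\sum_{u} g_2(\lk u)=3g_3(\D)+dg_2(\D)$ of Lemma \ref{g-relations} together with the asserted bound $g_3(\D)\leq 4$, obtaining $f_0(\D)\leq d+4$. You instead run the rigidity argument: the Cone Lemma gives $\dim$ of the stress space of $\st v$ equal to $g_2(\lk v)=3=g_2(\D)$, extension by zero traps every generic stress of $\D$ inside $E(\st v)$, and Proposition \ref{vertex participating in stress in prime npm} then forces $V(\st v)=V(\D)$ for every $v$; completeness of $G(\D)$ plus the quadratic $(f_0-(d+4))(f_0-(d-1))=0$ pins down $f_0(\D)=d+4$ exactly. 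This is the same mechanism the paper itself uses repeatedly in other lemmas (e.g.\ Lemma \ref{octahedral}), so it is squarely within the paper's toolkit. What your route buys: it avoids any appeal to an upper bound on $g_3$ (which for general normal pseudomanifolds, as opposed to homology manifolds, is not obviously available and is not justified in the paper's proof, whose first sentence even silently replaces ``normal pseudomanifold'' by ``homology manifold''), and it yields the stronger conclusions that $G(\D)$ is complete and $f_0(\D)=d+4$ on the nose, which makes the subsequent application of Proposition \ref{reducible} (stated only for $(d+4)$-vertex complexes) immediate rather than implicit. What the paper's route buys is brevity, at the cost of that unproved $g_3$ bound. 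One cosmetic caveat: after Proposition \ref{d+3 vertex npm} you should note that $\dim X\geq 1$ (a $0$-dimensional normal pseudomanifold has only two vertices, so $f_0(X)=\dim X+3$ rules this out), but this is the same unstated edge case as in the paper and does not affect the argument.
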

\begin{proof}
Since $g_2(\D)=3$, from Corollary \ref{g3 bound} we have $g_3(\D)\leq 4$. Therefore, based on the inequality $g_3(\D)\leq 4$ and utilizing Lemma \ref{g-relations}, we can deduce that $3f_0(\D)\leq 3\cdot 4+3\cdot d$, which simplifies to $f_0(\D)\leq d+4$. Note that the only $(d+2)$-vertex normal 4-pseudomanifold is the boundary complex of a $(d+1)$-simplex, and by \Cref{d+3 vertex npm}, a $(d+3)$-vertex normal $d$-pseudomanifold has a $g_2$ value of at most 1. Therefore, $f_0(\D)=d+4$.

Suppose $\D$ is reducible, and let $\D=X\star\mathbb{S}^{c}_{c+2}$, where $X$ is a normal pseudomanifold and $c\geq 0$. If $c=0$, then there is a vertex, say $u$, in  $\mathbb{S}^{c}_{c+2}$ such that $V(\D)\neq V(\st u)$. Therefore, $\lk u$ contains at most $d+2$ vertices. Using Proposition \ref{d+3 vertex npm}, we get $g_2(\lk u)\leq 1$, which is a contradiction. On the other hand, if $c=1$, then for a vertex, say $v$, in $\mathbb{S}^{c}_{c+2}$, $\lk v$ contains a missing edge that is present in $D$, contradicting Corollary \ref{number of missing edges}. 
Therefore, $c\geq 2$. Furthermore, $X$ is a $(d-c-1)$-dimensional normal pseudomanifold with $d-c+2$ vertices. Thus, by Proposition \ref{d+3 vertex npm}, $X$ is the join of two standard spheres, say $\mathbb{S}^{a}_{a+2}$ and $\mathbb{S}^{b}_{b+2}$, where $a$ and $b$ are non-negative integers. Therefore, $\D=\mathbb{S}^{a}_{a+2}\star \mathbb{S}^{b}_{b+2}\star \mathbb{S}^{c}_{c+2}$. If either $a$ or $b$ is in $\{0,1\}$, then by the same argument as above, we get a contradiction. Hence $a,b\geq 2$.

If $\D$ is irreducible, then the conclusion follows from Proposition \ref{reducible}.
\end{proof}

Let $d\geq 4$ and $\D$ be a prime homology $d$-manifold with $g_2(\D)=3$. Recall from Lemma \ref{all non prime vertex links with 1<=g2<=2} that if $\D$ contains a vertex $v$ such that $\lk v$ is not prime, then $\D$ can be obtained from a triangulated sphere with $g_2 = 2$ through a bistellar 1-move and an edge contraction. On the other hand, if $\D$ contains a vertex $u$ for which $\lk u=C\star\p\s^{d-2}$, where $C$ is an $n$-cycle, $n\geq 4$, then Lemma \ref{d>4,g2=1} ensures that either $\s^{d-2} \notin \D$, or $C$ contains a vertex that is not incident to any diagonal edge in $\D[V(C)]$. Therefore, by Lemma \ref{missing edge}, $\D$ is obtained from a triangulated $d$-sphere with $g_2 \leq 2$ by an edge expansion. Furthermore, assuming $g_2(\lk v)=3$ for every vertex $v\in\D$, one has $d\geq 5$ (from Lemma \ref{prime normal 4pseudomanifold}). Therefore, the structure of $\D$ can be deduced from Lemma \ref{d>4, all g2=3}. To address the remaining aspects, i.e., when $\D$ contains a vertex, say $v$, such that $g_2(\lk v)\leq 2$, we will examine the following class.

\begin{definition}\label{DefHa}
For $d\geq 4$, the class $\mR_d$ consists of the prime normal $d$-pseudomanifolds $\D$ such that $g_2(\D)=3$, and $\D$ satisfies the following:

\begin{enumerate}[$(i)$]
\item $\lk w$ is prime for every vertex $w$ in $\D$,

\item if $\D$ contains a vertex $u$ such that $\lk u=C\star\p\s^{d-2}$, where $C$ is an $n$-cycle, then  $n=3$, and

\item there exists a vertex $v$ in $\D$ such that $g_2(\lk v)\leq 2$.
\end{enumerate}
\end{definition}

Note that if $\D$ is a prime normal $d$-pseudomanifold belonging to the class $\mR_d$, then by Lemma \ref{prime vertex links has g2>=1}, we have $g_2(\lk v) \geq 1$ for every vertex $v$ in $\D$.


\begin{Lemma}\label{d>4,g2=2}
Let $d\geq 5$, and let $\D$ be in $\mR_d$. If $u$ is a vertex such that $g_2(\lk u)=2$ and $g_2(\lk uv)\geq 1$ for every vertex $v\in\lk u$, then $\D$ is obtained from a triangulated $d$-sphere with $g_2\leq 2$ by an edge expansion.
\end{Lemma}
\begin{proof}
Since $\lk u$ is a prime normal $d$-pseudomanifold with $g_2(\lk u)=2$, from  Lemma \ref{normal pseudomanifolds with g2 <= 2} it follows that $\lk u$ is a triangulated sphere.  Furthermore, since $g_2(\lk uv)\geq 1$ for every vertex $v\in\lk u$, Proposition \ref{Zheng} $(i)$ implies that $\lk u=\p\s^{1}\star\p\s\star\p\t$, where $\s^1,\s$, and $\t$ are simplices of dimensions $1,i$, and $(d-i-1)$, respectively, for $2\leq i\leq d-3$. Let us take $\p\s^1=\{p,q\}$ (i.e., $\s^1=pq$). We aim to find a vertex $x$ in $\lk u$ such that the edge $ux$ is admissible for edge contraction.

If at least one of $\s^1$, $\s$, and $\t$ is a missing simplex in $\D$, then for any vertex $x$ belonging to that missing simplex, the edge $ux$ is admissible for the edge contraction. Suppose that all three simplices are present in $\D$. Then, $g_2(\D)=g_2(\D[V(\st u)])=3$, and hence, by Corollary \ref{no vertex outside}, it follows that $V(\D)=V(\st u)$.

Since $V(\D)=V(\st u)$ and $u\notin\lk\t$, the number of vertices in $\lk\t$ is at most $i+3$. Therefore, according to Proposition \ref{d+3 vertex npm}, $\lk\t$ falls into one of the following types:
\begin{enumerate}
\item $\p(p\star\s)$ or $\p(q\star\s)$,
\item $\p(pq\star(\s- x_0))$, where $x_0$ is a vertex of $\s$,
\item $\p(pq)\star\p\s$,
\item $\p(x_{1}\cdots x_{j})\star\p(pqx_{j+1}\cdots x_{i+1})$, where $\s=x_1\cdots x_{i+1}$ and $2\leq j\leq {i}$,
\item $\p(px_{1}\cdots x_{j})\star\p(qx_{j+1}\cdots x_{i+1})$, where $\s=x_1\cdots x_{i+1}$ and $1\leq j\leq {i}$.
\end{enumerate}

Suppose that $\lk\t$ is of the form given in type 1. If $\lk\t=\p(p\star\s)$, then $p\star\p\s\subseteq\lk\t$, implying $\p\s\star\t\subseteq\lk p$. Since $\lk u=\p(pq)\star\p\s\star\p\t$, we have $u\star\p\s\star\p\t\subseteq\lk p$. Consequently, $\lk p=(\t\star\p\s)\cup (u\star\p\s\star\p\t)$, and this implies $q\notin\lk p$, leading to $pq\notin\D$, which contradicts the assumption of $\s^1\in\D$. A similar situation arises if $\lk\t=\p(q\star\s)$. Therefore, $\lk \t$ cannot be of type 1.

Let $\lk\t$ be of type 2, 3, or 4. In each case, $\s$ is not in $\lk\t$. We select a vertex $x$ in $V(\s)$ as follows: for type 2, we let $x=x_0$; for type 3, any fixed vertex is selected; and for type 4, we set $x=x_1$. We denote the face $\s-x$ as $\s_x$. Then $\t\star\s_x\star p$ forms a $d$-simplex in $\D$.

Since $\lk\s$ is a normal $(d-i-1)$-pseudomanifold that does not contain the vertex $u$, the number of vertices in $\lk \s$ is at most $d-i+2$. Furthermore, the simplex $\sigma\t$ is not in $\D$, implying that $\lk \s$ falls into one of the following types:
\begin{enumerate}[$(i)$]
\item $\p(pq\star(\t-y_0))$ where $y_0$ is a vertex in $\t$,
\item $\p(pq)\star\p\t$,
\item $\p(y_{1}\cdots y_l)\star\p(pq\star(\t-y_{1}\cdots y_l))$, where $\t=y_{1}\cdots y_{d-i}$.
\end{enumerate}  
Let us choose a vertex $y$ from $V(\t)$ as follows: when $\lk\s=\p(pq\star(\t-y_0))$, we set $y=y_0$; for $\lk\s=\p(pq)\star\p\t$, $y$ can be any vertex; and for $\lk\s=\p(y_{1}\cdots y_l)\star\p(pq\star(\t-y_{1}\dots y_l))$, we take $y=y_1$. For the chosen $y$, we denote the face $\t-y$ as $\t_y$. In any of these three types of $\lk\s$, we will have a $d$-simplex $\t_y\star\s\star p\in\D$. Since $\lk u=\p(pq)\star\p\s\star\p\t$,  $pu\star\s_x\star\t_y$ is a $d$-simplex in $\D$. Therefore, we have three $d$-simplices $\t\star\s_x\star p$, $\t_y\star\s\star p$, and $pu\star\s_x\star\t_y$ in $\D$.  Thus, the link of the $(d-1)$-simplex $p\star\s_x\star\t_y$ contains three distinct vertices: $u$, $x$, and $y$, which leads to a contradiction. Hence, $\lk \t$ cannot be of types 2, 3, or 4.

If $\lk \t$ is of type 5, then $\lk \t$ contains the $i$-simplex $\s$. Hence, $\lk \s$ will be one of the following types:
\begin{enumerate}[$(i)$]
\item $\p(p\star\t)$ or $\p(q\star\t)$,
\item $\p(py_{1}\cdots y_l)\star\p(q\star(\t - y_{1}\cdots y_l))$, where $\t=y_{1}\cdots y_{d-i}$ and $1\leq l\leq (d-i-1)$.
\end{enumerate}
Let $\lk \s=\p(p\star\t)$. Then $p\star\p\t\subseteq\lk\s$, and therefore $\s\star\p\t\subseteq\lk p$. On the other hand, $\lk u=\p(pq)\star\p\s\star\p\t$ implies that $u\star\p\s\star\p\t\subseteq\lk p$. Since $\lk p$ is closed, it follows that $\lk p=(\s\star\p\t)\cup (u\star\p\s\star\p\t)$. Therefore, $q$ is not in $\lk p$, implying that $pq$ is not an edge in $\D$. This contradicts the hypothesis that the simplex $\s^1$ is present in $\D$. A similar situation arises if $\lk\s=\p(q\star\t)$. Therefore, $\lk \s$ is not of type $(i)$.

Let $\lk \s$ be of type $(ii)$. Let us denote the faces $\s - x_1$ and $\t - y_1$ by $\s_{x_1}$ and $\t_{y_1}$, respectively. In this case, $\lk \t$ is of type 5 implies that $p\star\s_{x_1}\star\t$ is a $d$-simplex in $\D$. Further, if $\lk \s$ is of type $(ii)$, then $p\star\s\star\t_{y_1}$ is a $d$-simplex in $\D$. Therefore, $\lk (p\star\s_{x_1}\star\t_{y_1})$ contains the vertices $x_1$ and $y_1$. Since $\lk u=\p(pq)\star\p\s\star\p\t$, we find that $pu\star\s_{x_1}\star\t_{y_1}$ is a $d$-simplex in $\D$. Hence, the link of the $(d-1)$-simplex $p\star\s_{x_1}\star\t_{y_1}$ contains three vertices: $x_1, y_1$, and $u$. 

However, having three vertices in the link of a $(d-1)$-simplex contradicts the properties of a normal $d$-pseudomanifold. Therefore, at least one of $\s^1,\s$, and $\t$ must be a missing face in $\D$. Consequently, there is a vertex $x$ in $\lk u$ such that the edge $ux$ is admissible for edge contraction (note that $\lk u$ is a triangulated sphere).

Let $\D'$ be the resulting complex obtained after edge contraction. Then, $f_0(\D')=f_0(\D)-1$ and $f_1(\D')\leq f_1(\D)-d-2$. Thus, $g_2(\D')\leq 2$, and by Lemma \ref{normal pseudomanifolds with g2 <= 2}, $\D'$ is a triangulated sphere. Moreover, $\D$ is obtained from $\D'$ by an edge expansion.
\end{proof}
\begin{Lemma}\label{octahedral 2 sphere, d=4, g2=2}
Let $\D\in \mR_4$, and let $\D$ contain a vertex $u$ such that $\lk u$ is an octahedral $3$-sphere. Then $\D$ is obtained by a central retriangulation of a triangulated $4$-sphere with $g_2=1$ along the star of an edge. 
\end{Lemma}
\begin{proof}
Since $\lk u$ is an octahedral $3$-sphere, $\lk uv$ is an octahedral $2$-sphere for every vertex $v$ in $\lk u$. Since $g_2(\lk u)=2$, Corollary \ref{number of missing edges} implies that there is at most one missing edge in $\lk u$ that is present in $\D$. Therefore, there are at least two antipodal vertices in $\lk u$ that are not adjacent in $G(\D)$.

\begin{figure}[ht]
\tikzstyle{ver}=[]
\tikzstyle{vertex}=[circle, draw, fill=black!100, inner sep=0pt, minimum width=4pt]
\tikzstyle{edge} = [draw,thick,-]
\centering
\begin{tikzpicture}[scale=0.75]
\begin{scope}[shift={(0,0)}]
\foreach \x/\y/\z in {-.5/0/c,3/0/d,3.5/2/a,0/2/b,1.25/-2/f,1.75/4/g,1.5/1/x,7.5/0/c',11/0/d',11.5/2/a',8/2/b',9.25/-2/f',9.75/4/g',9.5/1/x'}{
\node[vertex] (\z) at (\x,\y){};}
\foreach \x/\y in {a/b,a/d,a/g,a/f,a/x,b/c,b/g,b/f,b/x,d/c,c/x,c/f,c/g,d/x,d/f,d/g,f/x,g/x,a'/b',a'/d',a'/g',a'/f',a'/x',b'/c',b'/g',b'/f',b'/x',d'/c',c'/x',c'/f',c'/g',d'/x',d'/f',d'/g',f'/x',g'/x'}{\path[edge] (\x) -- (\y);}
\foreach \x/\y in {x/x'}{\draw[densely dotted] (\x) -- (\y);}
\foreach \x/\y/\z in {-.9/-.1/c,3.4/-.1/d,3.9/2.2/a,-.4/2.2/b,1.25/-2.5/f,1.75/4.5/g,1.9/1.5/x,7.2/-.1/c,11.5/-.1/d,11.9/2.2/a,7.5/2.2/b,9.25/-2.5/f,9.75/4.5/g,9.9/1.4/y}
\node[ver] () at (\x,\y){$\z$};

\end{scope}
\end{tikzpicture}
\caption{The antipodal vertices $x$ and $y$ in $\lk u$.} \label{fig:2}
\end{figure}
 
 Let us choose two antipodal vertices, say $x$ and $y$, in $\lk u$ such that the edge $xy$ is not present in $G(\D)$, as shown in Figure \ref{fig:2} using a dotted line. Consider the octahedral $2$-sphere $S$ that is the induced subcomplex of $\lk u$ on the vertex set $V(\lk u)\setminus \{x, y\}$. Let $B:=xy\star S$. Then, $B$ is a $4$-ball with $\lk u$ as its boundary.

We replace $st_{\D} u$ in $\D$ with $B$, and denote the resulting complex as $\D'$. Then, $g_2(\D')=1$, and by  Lemma \ref{normal pseudomanifolds with g2 <= 2} $\D'$ is a triangulated $4$-sphere. Moreover, $\D$ can be obtained from $\D'$ by a central retriangulation along $st_{\D'} xy$, which is essentially subdividing the edge $xy$ in $\D'$. More generally, we can describe the process as $\D$ being obtained from $\D'$ through an edge expansion. 
\end{proof}
\begin{Lemma}\label{d>4,g2=2, one edge with g2=0, three adjacent facet}
Let $d\geq 5$, and let $\D$ be in $\mR_d$. Suppose there exists a vertex $u$ such that $g_2(\lk u)=2,$ and $\lk u$ is of type $(a)$ as mentioned in Proposition $\ref{Zheng} \hspace{.1cm}(ii)$. Then $\D$ is obtained from a triangulated $d$-sphere with $g_2\leq 2$ by an edge expansion.
\end{Lemma}
\begin{proof}

According to Proposition \ref{Zheng} $(ii)$, $\lk u$ results from a central retriangulation of the union of three adjacent facets in a stacked $(d-1)$-sphere, say $K$. Let $K:=\p\s^d_1\#\p\s^d_2\#\cdots \#\p\s^d_k$, where each $\s^d_i$ is a $d$-simplex. When $k=1$ (i.e., $K=\p\s^{d}_1$), $\lk u$ contains exactly $d+2$ vertices, leading to $g_2(\lk u)\leq 1$. Therefore, we conclude that $k\geq 2$, indicating that $K$ contains at least one missing $(d-1)$-simplex.

Let $x\leq\s^d_1$ and $y\leq\s^d_k$ be the vertices that are not part of missing $(d-1)$-simplices in $K$. Since $\lk u$ is prime and it is obtained through central retriangulation of $K$ along a ball $B$ that consists of the union of three adjacent facets in $K$, the interior of $B$ contains two $(d-2)$-simplices in such a way that $\lk u$ has no missing $(d-1)$-simplex. Therefore, at least one $(d-2)$-face of each missing $(d-1)$-simplex in $K$ must vanish during the central retriangulation process. This implies that both the vertices $x$ and $y$ are in $B$.

Let $w$ be the new vertex added to form $\lk u$ from $K$ during the central retriangulation such that $\lk uw=\p B$. Since $g_2(\D)=3$ and $g_2(\lk u)=2$, Corollary 
\ref{number of missing edges} implies that $\lk u$ has at most one missing edge that is present in $\D$. If $w$ is not incident to the missing edge of $\lk u$ that is present in $\D$, then $uw$ is an admissible edge. On the other hand, if $w$ is incident to a missing edge in $\lk u$ that is also present in $\D$, then either $ux$ or $uy$ forms an admissible edge. After applying an edge contraction along an admissible edge, let $\D'$ be the resulting complex. Then $g_2(\D')\leq 2$, and hence, by  Lemma \ref{normal pseudomanifolds with g2 <= 2}, $\D'$ is a triangulated $d$-sphere. Moreover, $\D$ is derived from $\D'$ through an edge expansion.
\end{proof}
\begin{Lemma}\label{d>4,g2=2, one edge with g2=0, two adjacent facet}
Let $d\geq 4$, and let $\D$ be in $\mR_d$. If $u$ is a vertex in $\D$ such that $g_2(\lk u)=2$ and $\lk u$ is a central retriangulation of a polytopal $(d-1)$-sphere with $g_2=1$ along the union of two adjacent facets, then $\D$ is one of the following: 

\begin{enumerate}[$(i)$]

 \item $\mathbb{S}^{a}_{a+2}\star \mathbb{S}^{b}_{b+2}\star \mathbb{S}^{c}_{c+2}$, where $a,b$, and $c\geq 1$, and $d\geq 5$,
 
  \item  an iterated one-vertex suspension of $\mathbb{RP}_6^{2}$,    
   
   \item obtained from a triangulated $d$-sphere with $g_2\leq 2$ by an edge expansion or a generalized bistellar $1$-move.
 \end{enumerate}
\end{Lemma}
\begin{proof}
Let $\lk u$ be obtained by centrally retriangulating a polytopal $(d-1)$-sphere, denoted by $K$, with $g_2(K)=1$ along the union of two adjacent facets. Let $w$ represent the newly added vertex during the central retriangulation process. If $K$ is prime, then from  Lemma \ref{normal pseudomanifolds with g2 <= 2} and Proposition \ref{Nevo} it follows that $K$ is either the join of the boundary complex of two simplices, where each simplex has a dimension of at least 2 and their dimensions add up to $d$, or the join of a cycle and the boundary complex of a $(d-2)$-simplex. On the other hand, if $K$ is not prime, then $K$ is obtained from a prime polytopal $(d-1)$-sphere with $g_2 = 1$ through a finite number of facet subdivisions. Furthermore, since $g_2(\lk u)=2$, Corollary \ref{number of missing edges} implies that the number of missing edges in $\lk u$ that are present in $\D$ is at most one.
 
\vst
\noindent \textbf{Case 1:} Assume that $\lk u$ contains a missing edge, say $e$, which is present in $\D$. Then $g_2(\D)=g_2(\D[V(\st u)])=3$, and hence, by \Cref{no vertex outside}, we have $V(\D)=V(\st u)$. Suppose $K$ is prime. Then either $K=\p\s^i\star\p\s^{d-i}$ for some $2\leq i\leq d-2$, or $K=C\star\p\s^{d-2}$, where $C$ is a cycle of length at least 4. 

If $K=\p\s^i\star\p\s^{d-i}$ for some $2\leq i\leq d-2$, then  $V(\D)=V(\st u)$ implies that $f_0(\D)=d+4$. Since $g_2(\D)=3$, by \Cref{d+4 vertex g2=3}, the graph of $\D$ is complete.

Suppose $\D$ is reducible, and let $\D=X\star\mathbb{S}^{c}_{c+2}$, where $c\geq 0$ and $X$ is a normal $(d-c-1)$-pseudomanifold. If $c=0$, then it contradicts the fact that the graph of $\D$ is complete. Therefore, $c\geq 1$.  Since $X$ is a normal $(d-c-1)$-pseudomanifold with $d-c+2$ vertices, by Proposition \ref{d+3 vertex npm}, $X$ is the join of two standard spheres, say $\mathbb{S}^{a}_{a+2}$ and $\mathbb{S}^{b}_{b+2}$ for some non-negative integers $a$ and $b$. Therefore, $\D=\mathbb{S}^{a}_{a+2}\star \mathbb{S}^{b}_{b+2}\star \mathbb{S}^{c}_{c+2}$. If either $a$ or $b$ is $0$, then it contradicts the fact that $G(\D)$ is complete. Thus, both $a$ and $b$ are greater than or equal to 1. Since all of $a,b$, and $c$ are at least 1, we have $d\geq 5$.

If $\D$ is irreducible, then by Proposition \ref{reducible}, $\D$ is either the cyclic combinatorial sphere $C^{d}_{d+4}$ for some odd $d$, or it can be obtained as an iterated one-vertex suspension of either the $6$-vertex real projective space $\mathbb{RP}_6^{2}$, or the cyclic combinatorial sphere $C^{d'}_{d'+4}$, where $d'$ is odd. However, for $d\geq 5$, if $\D$ is a $d$-dimensional manifold that is either the $(d+4)$-vertex cyclic combinatorial sphere $C^{d}_{d+4}$ or an iterated one-vertex suspension of $C^{d'}_{d'+4}$, where $d'$ is odd, then the neighborliness property of the cyclic combinatorial spheres contradicts the existence of a vertex whose link has a $g_2$ values of 2. Further, if $d=4$, and $\D$ is the one vertex suspension of the 7-vertex cyclic combinatorial sphere $C^{3}_{7}$, then by the same argument as in the first paragraph of Lemma \ref{prime normal 4pseudomanifold}, we have $g_3(\D)\leq 0$. Since the $g_2$ of the link of the pair of vertices that participated in the one-vertex suspension is 3 (see the paragraph after Definition \ref{one-vertex  suspension}), therefore, $g_2(\lk v)\geq 1$ for every vertex $v\in \D$ (see Lemma \ref{prime vertex links has g2>=1}) implies that there is no vertex in $\D$ with the link that has a $g_2$-value of 2. Hence, $\D$ is aniterated one-vertex suspension of the $6$-vertex real projective space $\mathbb{RP}_6^{2}$.

Let $K=C\star\p\s^{d-2}$, where $C$ is a cycle of length $n\geq 4$. Here,  $\s^{d-2}\notin\D$; otherwise, $V(\lk\s^{d-2})\subseteq V(C)$, and for any vertex $x\in C\cap\lk\s^{d-2}$, we have $\partial(u\star\s^{d-2})\subseteq \lk x$, contradicting the fact that $\lk x$ is prime. On the other hand, if $n\geq 6$ then there is a vertex in the cycle whose link in $\D$ is a stacked sphere. Therefore, $n\leq 5$. Let $V(C)=\{a_1,\dots,a_m\}$, where $m=4$ or $5$, and $a_ia_{i+1}$ is an edge in $C$. If there is a vertex in $C$, say $x$, that is neither adjacent to $w$ nor incident to $e$, then $d(x)=d+2$, which is not possible. Thus, each vertex of $C$ must be either adjacent to $w$ or incident to $e$ in $\D$. Since $e$ is the only missing edge of $\lk u$ that is present in $\D$, we have $n=4$, and $w$ is incident to $e$. Therefore, there is a vertex, say $c$, in $\s^{d-2}$, which is not adjacent to $w$ in $\D$. Hence, $\lk cu=C\star\p\s^{d-3}$, and $cu$ is an admissible edge.

Suppose that $K$ is not prime, and let $\lk u$ be obtained by subdividing the $(d-2)$-face $\t$, where $lk_{K}\t=\{r,s\}$ for some vertices $r$ and $s$. Note that the $(d-2)$-face $\t$ must participate in all the subdivisions; otherwise, $\lk u$ will not be prime. Moreover, $rs\notin\lk u$. Therefore, if $e=rs$, then $uw$ is an admissible edge (since $\lk u\cap\lk w=\lk uw=\p (rs)\star\p\t$). On the other hand, if $e\neq rs$ then both $ur$ and $us$ are admissible edges. 

Thus, in any situation, there exists an admissible edge for edge contraction. Upon contracting this edge, we obtain a normal pseudomanifold $\D'$ such that $g_2(\D') \leq 2$, and $\D$ is obtained from $\D'$ by an edge expansion. By Lemma \ref{normal pseudomanifolds with g2 <= 2}, $\D'$ is a triangulated sphere. Therefore, the result follows whenever $\lk u$ contains a missing edge that is present in $\D$.

\vst
\noindent \textbf{Case 2:}
Suppose that $\lk u$ has no missing edge that is present in $\D$. If $K$ is not prime, then $\lk u\cap\lk w=\lk uw$ trivially. If $K$ is prime and $K=C\star\p\s^{d-2}$, where $C$ is a cycle of length at least $4$, then $w$ will be incident to at most three vertices of $C$. Let $t$ be a vertex in $C$ that is not adjacent to $w$. Then $\lk t\cap\lk u=\lk tu=\p(xy)\star\p\s^{d-2}$, where $x$ and $y$ are the vertices in $C$ adjacent to $t$.

Now, assume that $K=\p\s^{i}\star\p\s^{d-i}$, where $2\leq i\leq d-2$. Let $\lk u$ be obtained by a central retriangulation of $K$ along the $(d-1)$-ball, denoted as $B$, with a new vertex $w$. Let $\t$ be the $(d-2)$-simplex in the interior of $B$ (the simplex to be subdivided during the central retriangulation of $B$). Note that $B$ contains exactly $d+1$ vertices. Let us take $V(K)\setminus V(B)=\{x\}$, where $x$ is a vertex of $\s^{i}$. If $\s^{i}\notin\D$, then $\lk u\cap\lk u=\lk ux=\p(\s^{i}-x)\star\p\s^{d-i}$, and hence $ux$ is an admissible edge. Now, assume that $\s^i\in\D$.

\vst
\noindent \textbf{Case 2a:} If $\t\notin\D$, then we replace $\st uw=uw\star\p B$ with $B\star\p (uw)$. Let $\D'$ be the resulting complex. Then  $g_2(\D')=2$, and $\D$ is obtained from $\D'$ by a generalized bistellar 1-move. 
\vst
\noindent \textbf{Case 2b:} 
Suppose $\t$ is a face of $\D$. As $\t$ is a $(d-2)$-simplex, its link $\lk \t$ forms a cycle in $\D$. Let $V(B) = V(\t) \cup \{y_1, y_2\}$. By the hypothesis on $x$, both vertices $y_1$ and $y_2$ are in $\s^{d-i}$, and we have $\t = (\s^i - x) \star (\s^{d-i} - y_1 y_2)$. Suppose $y_1$ is in $\lk \t$. Then $\p(u \star \t) \subseteq \lk y_1$. However, $\t$ is a missing $(d-2)$-simplex in $\lk u$, i.e., $u \star \t \notin \D$. This implies that $\lk y_1$ is not prime, which is a contradiction. Therefore, $y_1 \notin \lk \t$. Similarly, $y_2$ and $w$ are not in $\lk \t$.  Since $\lk \t$ is a cycle, it contains at least three vertices. Therefore, $\{y_1,y_2,u,w\}\cap\lk \t=\emptyset$ implies that $\lk \t$ contains at least two more vertices that are not in $\st u$.

Note that $xw \notin \D$. Therefore, $g_2(\lk x) \leq 2$ and $g_2(\lk w) \leq 2$. If $\lk \s^i = \p \s^{d-i}$, then either $y_1 \in \lk \t$ or $y_2 \in \lk \t$, which is a contradiction. Therefore, $\lk \s^i$ contains at least one vertex from $\D \setminus \st u$. Hence, $d(x) \geq d + 3$ and $g_2(\lk x) = 2$.

We claim that there exists a vertex $s$ in $\D$ such that $f_0(\lk s) \geq d + 4$ and $g_2(\lk s) \leq 2$. Once such a vertex exists, the result follows from Lemmas~\ref{d>4,g2=2}–\ref{d>4,g2=2, one edge with g2=0, three adjacent facet}, Case 1, and the first paragraph of Case 2 of this lemma. We conclude the lemma with the proof of the claim.


\vspace{.3cm}
\noindent \textbf{Claim 1:} There exists a vertex $s$ in $\D$ such that $f_0(\lk s)\geq d+4$, and $g_2(\lk s)\leq 2$.

\vspace{.3cm}
\noindent \textit{Proof of Claim 1.}  Note that for any vertex $y \leq \t$, we have $d(y) \geq d + 5$. If $g_2(\lk y) \leq 2$ for some vertex $y \in V(\t)$, then we are done. Now assume that $g_2(\lk y) = 3$ for every vertex $y \in V(\t)$. If $\lk \t$ contains the vertex $x$, then $d(x) \geq d + 4$, and hence we are done. Now assume that $x \notin \lk \t$. Then $V(\lk \t) \subseteq V(\D) \setminus V(\st u)$, and it follows that $f_0(\D) \geq d + 7$. Moreover, by Proposition 2.15 of \cite{BS2}, the graph $G(\D[V(\D \setminus \st u)])$ is complete.

Let $d(p) \leq d + 3$ for every vertex $p \in V(\D) \setminus V(\t)$. Then $d(p) = d + 3$ for every vertex $p \in V(\D) \setminus V(\t)$; otherwise, the existence of a vertex of degree $d + 2$ in $V(\D) \setminus V(\t)$ implies that $f_1(\D)=\frac{1}{2} \sum_{v\in\D} d(v)\leq \frac{1}{2}[(d-1)(f_0(\D)-1)+(d+2)+(f_0-d)(d+3)],$ which simplifies to $g_2(\D) \leq \frac{5}{2}$, contradicting the hypothesis of $g_2(\D)=3$. Therefore, from conditions $(i)$ and $(ii)$ of Definition~\ref{DefHa}, it follows that $g_2(\lk p) \geq 2$ for every vertex $p \in V(\D) \setminus V(\t)$.

Now, according to Lemma~\ref{g-relations}, we have $3(d-1)+2(f_0(\D)-(d-1))\leq 3g_3(\D)+dg_2(\D)$. Since $g_3(\D) \leq 4$ (see Lemma~\ref{g3 bound}), it follows from the last inequality that $2f_0(\D)+d-1\leq 3d+12$. Therefore, $f_0(\D) \leq d + \frac{13}{2}$, which contradicts the fact that $f_0(\D) \geq d + 7$. Thus, there exists a vertex $z \in V(\D) \setminus V(\t)$ such that $d(z) \geq d + 4$. If $g_2(\lk z) \leq 2$, then we are done. Now assume that $g_2(\lk z) = 3$. Then, by the hypothesis, $z \in \{y_1, y_2\}$ with $f_0(\lk z) = f_0(\D) - 1$. Without loss of generality, let $z = y_1$. Thus, we have a set of $d$ vertices, $V(\t) \cup \{y_1\}$, such that $g_2(\lk y) = 3$ for every vertex $y$ in the set.

If $\lk\s^{i}$ contains two or more vertices from $V(\D)\setminus V(\st u)$, then $d(x)\geq d+4$ and we are done. It remains to consider the situation when $\lk\s^{i}$ contains exactly one vertex from $V(\D)\setminus V(\st u)$. However, in such instances, $y_2\notin\lk \t$ implies that $d(y_2)\geq d+4$. 
Therefore, $g_2(\lk y_2) \leq 2$ completes the claim. Assume that $g_2(\lk y_2) = 3$. Then we have a set of $d + 1$ vertices, $V(\t) \cup \{y_1, y_2\}$, such that $g_2(\lk y) = 3$ for every vertex in the set. Lemma~\ref{g-relations} then implies that $3(d+1)+5+[f_0(\D)-d-4]\leq 3d+12$, i.e., $f_0(\D)\leq d+8$. 

If $f_0(\D) = d + 7$, then $\lk \t$ is a $3$-cycle. Moreover, the fact that $x$ is adjacent to a vertex in $\D \setminus \st u$ confirms the existence of a vertex in $\D \setminus \st u$ with degree at least $d + 4$. On the other hand, if $f_0(\D) = d + 8$, then the completeness of the graph $G(\D[V(\D \setminus \st u)])$ implies that each vertex in $\D \setminus \st u$ has degree at least $d + 4$. This completes the proof of Claim 1.
\end{proof}

\begin{Lemma}\label{d>4, g2=1, D=join}
Let $d\geq 4$, and let $\D$ be in $\mR_d$. Suppose that $\D$ contains a vertex $u$  such that $\lk u=\p\s^i\star\p\s^{d-i}$, where $2\leq i\leq (d-2)$.  Then $\D$ is one of the following: 

\begin{enumerate}[$(i)$]

 \item $\mathbb{S}^{a}_{a+2}\star \mathbb{S}^{b}_{b+2}\star \mathbb{S}^{c}_{c+2}$, where $a,b$, and $c\geq 1$, and $d\geq 5$,
 
  \item  an iterated one-vertex suspension of $6$- or $7$-vertex $\mathbb{RP}^{2}$,    
   
   \item obtained from a triangulated $d$-sphere with $g_2\leq 2$ by an edge expansion or a generalized bistellar 1-move.
 \end{enumerate}
\end{Lemma}
\begin{proof}
Since the links of all the vertices are prime, it follows from Lemma \ref{prime vertex links has g2>=1} that $g_2(\lk v)\geq 1$ for every vertex $v$ in $\D$. If at least one of $\s^i$ and $\s^{d-i}$ is a missing face in $\D$, then the edge $ux$ is admissible for every vertex $x$ in that missing face.

Suppose that both $\s^i$ and $\s^{d-i}$ are faces of $\D$. If $\lk \s^i=\p\s^{d-i}$, then $\D=\st u\cup \st \s^i$, so $f_0(\D)=d+3$. Therefore, $\D=\p\s^j\star\p\s^{d-j}$ for some $j$, and hence $g_2(\D)=1$. Thus, $\lk \s^i$ contains at least one more vertex than just those in $V(\s^{d-i})$. It follows that for every vertex $x$ in $\s^i$, $f_0(\lk x)\geq d+3$. Similarly, for any vertex $y$ in $\s^{d-i}$, $f_0(\lk y)\geq d+3$ holds. If there is a vertex $v$ in $V(\s^i)\cup V(\s^{d-i})$ such that $g_2(\lk v)\leq 2$, then the result follows from Lemmas \ref {d>4,g2=2} - \ref{d>4,g2=2, one edge with g2=0, two adjacent facet}. For the remaining part of the proof, we assume that $g_2(\lk w)=3$ for every vertex $w$ in $V(\lk u)$. 

Since $g_2(\D)=3$, by Lemma \ref{g3 bound} we have $g_3(\D)\leq 4$. Furthermore, as discussed in the previous paragraph, $V(\D\setminus\st u)\neq \emptyset$. Let the vertices in $\D\setminus\st u$ be $x_1,\dots,x_s$ with $g_2(\lk x_i)=m_i$. Since $x_i\notin\lk u$, we have $g_2(\lk x_i)\leq 2$. Therefore, Lemma \ref{prime vertex links has g2>=1} provides us with $1\leq m_i\leq 2$. Applying Lemma \ref{g-relations}, we obtain the equation $3(d+2)+g_2(\lk u)+\sum_{i=1}^s m_i= 3g_3(\D)+3d$, which simplifies to $\sum_{i=1}^s m_i+7= 3g_3(\D)$. If $g_3(\D)\leq 2$, then $\sum_{i=1}^s m_i+1\leq 0$, which is not possible. Therefore, $g_3(\D)$ is either $3$ or $4$.

Let $g_3(\D)=4$. Then $\sum_{i=1}^{s} m_i=5$. If $g_2(\lk x_i)=1$ for all $i$, then $s=5$, and it follows that $f_0(\D)=d+8$. Furthermore, the total number of edges in $\D$ is at least $\binom{d+2}{2}+6(d+2)$, and hence $g_2(\D)\geq 6$, which is a contradiction. Therefore, $g_2(\lk x_i)=2$ for some $i$, and the result follows from Lemmas \ref{d>4,g2=2} - \ref{d>4,g2=2, one edge with g2=0, two adjacent facet}.

Let $g_3(\D)=3$. Then $\sum_{i=1}^s m_i=2$. If $s=1$, then $m_1=2$ and $d(x_1)=d+2$, which is a contradiction. Therefore, $s=2$, and hence $m_1=m_2=1$. Since $g_2(\lk w)=3$ for every vertex $w\in\lk u$, by \Cref{no vertex outside}, we have $V(\D)=V(\st w)$ for every vertex $w\in\lk u$. Therefore, the total number of edges connecting the vertices of $\lk u$ and the vertices in $\{x_1,x_2,u\}$ is $3(d+2)$. Now, if $x_1x_2\in\D$, then the total number of edges in $\D$ becomes $f_1(\lk u)+3(d+2)+1$, i.e., ${d+2\choose 2}+3d+7$. Hence, $g_2(\D)={d+2\choose 2}+3d+7-(d+1)(d+5)+{d+2\choose 2}=4$, which is a contradiction. Therefore, $x_1x_2\notin\D$. Since the minimum number of vertices in $\lk x_i$ is $(d+2)$, we have, $V(\lk x_i)=V(\lk u)$ for $i=1,2$. Let $\s^i=u_0u_1\cdots u_i$ and $\s^{d-i}=v_0v_1\cdots v_{d-i}$, where $u_k$ and $v_l$ are vertices of $\lk u$. Since $g_2(\lk x_1)=1$ and $\lk x_1$ contains $d+2$ vertices, we have $\lk x_1=\p(u_0u_1\cdots u_kv_0v_1\cdots v_l)\star\p(u_{k+1}\cdots u_i v_{l+1}\cdots v_{d-i})$ for some non-negative integers $l$ and $k$. Let us denote $\t_1= u_0u_1\cdots u_kv_0v_1\cdots v_l$ and $\t_2=u_{k+1}\cdots u_i v_{l+1}\cdots v_{d-i}$. Here $\t_1$ and $\t_2$ are both different from $\s^i$ and $\s^{d-i}$; otherwise, $\D$ would be a suspension of $\p\s^i\star\p\s^{d-i}$, which contradicts the hypothesis $g_2(\D)=3$. Let $\lk x_2=\p\alpha_1\star\p\alpha_2$. By the same arguments, $\alpha_1$ and $\alpha_2$ are different from $\t_1,\t_2,\s^i$ and $\s^{d-i}$. If any one of the simplices in $\{\t_1,\t_2,\alpha_1,\alpha_2\}$ is a missing face of $\D$, then for every vertex in that missing simplex we would have an admissible edge, and we are done. For the remaining part of the proof, we discuss the structure of $\D$ under the following assumption:

\vspace{.3cm}
\noindent \textbf{Assumption 1:} All the simplices in $\{\s^i,\s^{d-i},\t_1,\t_2,\alpha_1,\alpha_2\}$ are present in $\D$. 

{\vskip .25cm}
\noindent \textbf{Claim 1:} The dimensions of $\t_1,\t_2,\alpha_1,\alpha_2,\s^i,$ and $\s^{d-i}$ are at least $3$, and hence $d\geq 6$.\\

If $\dim (\t_1)=1$, then $\lk x_1$ is the suspension of the boundary of a $(d-1)$-simplex, and therefore, it is a stacked sphere. This contradicts the fact that $g_2(\lk x_1)=1$. If $\dim (\t_1)=2$, then $\t_2$ is a $(d-2)$-simplex in $\D$ that is not present in $\lk x_1$. Since $\lk \t_2$ is a cycle, it must contain at least one vertex, say $x$, from $\t_1$. Thus, $\p(x_1\star \t_2)\subseteq \lk x$. However, $x_1\star \t_2$ is not in $\D$, which contradicts the fact that $\lk x$ is prime. Using similar arguments, we can prove that the dimensions of $\t_2,\alpha_1,\alpha_2,\s^i$ and $\s^{d-i}$ are at least 3.

{\vskip .25cm}
\noindent \textbf{Claim 2:} If  $\lk x_1=\p(u_0u_1\cdots u_kv_0v_1\cdots v_l)\star\p(u_{k+1}\cdots u_i v_{l+1}\cdots v_{d-i})$ for some non-negative integers $l$ and $k$, then $\lk x_2=\p(u_0u_1\cdots u_k v_{l+1}\cdots v_{d-i})\star\p(u_{k+1}\cdots u_iv_0v_1\cdots v_l)$.\\

Note that $\lk x_2=\p\alpha_1\star\p\alpha_2$, where $\alpha_1$ and $\alpha_2$ are two missing faces in $\lk x_2$, and each $(d-1)$-face of $\lk x_2$ is of the form $(\alpha_1-a)\star(\alpha_2- b)$, where $a$ and $b$ are vertices of $\alpha_1$ and $\alpha_2$, respectively.  

\vst
\noindent \textbf{Case 1:} Let $k=i$. Then  $\t_1=u_0u_1\cdots u_iv_{0}v_{1}\cdots v_{l}$ and $\t_2=v_{l+1}\cdots v_{d-i}$, with $\dim (\t_2)\geq 3$, and for any ordered pair $(m,n)$ with $0\leq m\leq i$ and $l+1\leq n\leq (d-i)$, $\s_{mn}=( \s^{i}- u_m)\star(\s^{d-i} - v_n)$ is a $(d-1)$-simplex with $\lk \s_{mn}= \{u,x_1\}$. Therefore, $\s_{mn}$ cannot be in $\lk x_2$, for all $(m,n)$ with $0\leq m\leq i$ and $l+1\leq n\leq d-i$. In other words, both the vertices $u_m$ and $v_n$ are either in $V(\alpha_1)$ or in $V(\alpha_2)$. Now, fix an integer $m_0\in\{0,1,\dots.i\}$. Then, for any $n_0\in\{l+1,\dots, d-i\}$, $u_{m_0}v_{n_0}$ is either an edge of $\alpha_1$ or an edge of $\alpha_2$. Without loss of generality, let $u_{m_0}v_{n_0}\leq\alpha_1$. Then $u_{m_0}v_n\leq\alpha_1$ for all $n\in\{l+1,\dots, d-i\}$. Therefore, $u_mv_n\leq\alpha_1$ for all $(m,n)$ with $0\leq m\leq i$ and $l+1\leq n\leq (d-i)$. Thus, the simplex $\beta:=u_0u_1\cdots u_iv_{l+1}\cdots v_{d-i}$ is a face of $\alpha_1$. Since $\beta$ is neither in $\lk u$ nor in $\lk x_1$, it follows that $\lk \alpha_1$ contains no vertex from the set $\{u,x_1,x_2\}$. Hence, $\lk \alpha_1=\p\alpha_2$, and this implies $\D=\st x_2\cup\st \alpha_1$, contradicting the fact that $u$ and $x_1$ are in $\D$. Therefore, $k<i$. By a similar argument, we can prove that $l<d-i$.


\vst
\noindent \textbf{Case 2:}
Assume that $k<i$ and $l<(d-i)$. For an ordered pair $(m,n)$ with $0\leq m\leq k$ and $l+1\leq n\leq (d-i)$, $\s_{mn}:= (\s^{i}- u_m)\star(\s^{d-i} - v_n)$ is a $(d-1)$-simplex with $\lk \s_{mn}= \{u,x_1\}$.  Therefore, $\s_{mn}$ cannot be in $\lk x_2$ for all $(m,n)$ with $0\leq m\leq k$ and $l+1\leq n\leq (d-i)$.  In other words, both the vertices $u_m$ and $v_n$ are either in $V(\alpha_1)$ or in $V(\alpha_2)$. Now, fix an integer $m_0\in\{0,1,\dots k\}$. Then, for any $n\in\{l+1,\dots, d-i\}$, $u_{m_0}v_n$ is either an edge of $\alpha_1$ or an edge of $\alpha_2$. Without loss of generality, let $u_{m_0}v_n\leq\alpha_1$. Then, $u_{m_0}v_n\leq\alpha_1$ for all $n\in\{l+1,\dots, d-i\}$. Therefore, $u_mv_n\leq\alpha_1$, for all $(m,n)$ with $0\leq m\leq k$ and $l+1\leq n\leq (d-i)$. Thus $u_0u_1\cdots u_kv_{l+1}\cdots v_{d-i}\leq\alpha_1$. 
 
 For any ordered pair $(p,q)$ with $k+1\leq p\leq i$ and $0\leq q\leq l$, $\s_{pq}:= (\s^{i}- u_p)\star(\s^{d-i}-v_q)$ is a $(d-1)$-simplex in $\D$ with $\lk \s_{pq}= \{u,x_1\}$.  Therefore, $\s_{pq}$ cannot be a $(d-1)$-simplex in $\lk x_2$ for all $(p,q)$ with $k+1\leq p\leq i$ and $0\leq q\leq l$.  In other words, both the vertices $u_p$ and $v_q$ are either in $V(\alpha_1)$ or in $V(\alpha_2)$. Since $\dim (\alpha_2)\geq 3$, $V(\alpha_2)$ contains at least $4$ vertices from the set $\{u_{k+1},\dots, u_{i},v_0,v_1,\dots, v_l\}$. If $v_j\leq\alpha_2$ for some $j$, then the edge $u_pv_j\leq\alpha_2$ for all $p$ with $k+1\leq p\leq i$. This implies the edges $u_p v_q\leq\alpha_2$ for all pair $(p,q)$ with $k+1\leq p\leq i$ and $0\leq q\leq l$. Hence, $u_{k+1}\cdots u_iv_0v_1\cdots v_l\leq\alpha_2$. Since $\alpha_1$ and $\alpha_2$ have a disjoint set of vertices, we have $\alpha_1=u_0u_1\cdots u_kv_{l+1}\cdots v_{d-i}$ and $\alpha_2=u_{k+1}\cdots u_iv_0v_1\cdots v_l$.
 
 In the remaining part of the proof, we will take $\lk u=\p(u_0u_1\cdots u_i)\star\p(v_0,v_1,\cdots,v_{d-i})$, $\lk x_1=\p(u_0u_1\cdots u_kv_0v_1\cdots v_l)\star\p(u_{k+1}\cdots u_i v_{l+1}\cdots v_{d-i})$ and using Claim 2, $\lk x_2=\p(u_0u_1\cdots u_k v_{l+1}\cdots v_{d-i})\star\p(u_{k+1}\cdots u_iv_0v_1\cdots v_l)$.
 {\vskip .25cm}
\noindent \textbf{Claim 3:} For every vertex $x$ in  $V(\lk u)$, the convex hull of the vertices in $V(\lk u)\setminus\{x\}$ cannot be a facet of $\D$.\\

Let $x=u_j$ for some $j\in\{0,1,\dots,i\}$, and assume that the $d$-simplex $(\s^{i}- u_j)\star\s^{d-i}$ belongs to $\D$. Now, choose a vertex $v_s$ from $\lk u$ and consider the $(d-1)$-simplex $\s_{js}=(\s^{i}- u_j)\star(\s^{d-i}- v_s)$. If $s\leq l$, then $\lk \s_{js}=\{u,x_2,v_s\}$, and if $s\geq l+1$, then $\lk \s_{js}=\{u,x_1,v_s\}$. In both cases, the link of a $(d-1)$ simplex contains three vertices, which is not possible. The same result holds when $x=v_j$ for some $j$.

{\vskip .25cm}
\noindent \textbf{Claim 4:} $\D$ is the iterated one-vertex suspension of $7$-vertex $\mathbb{RP}^2$, and in each step, the one-vertex suspension happens with respect to a graph cone vertex.\\

From Claim 3, it follows that for every vertex $x$ in $\lk u$, there is no $d$-simplex in $\D$ which is the convex hull of the vertices in $V(\lk u)\setminus\{x\}$. Moreover, since $x_1,x_2\notin\lk u$ and $x_1x_2\notin \D$, every $d$-simplex in $\D$ contains exactly one vertex from the set $\{x_1,x_2,u\}$. Therefore, using Claims 1 and 2, for $\lk u=\p(u_0u_1\cdots u_i)\star\p(v_0,v_1,\cdots,v_{d-i})$, we obtain 
 \begin{eqnarray*} \D= u &\star & \p(u_0u_1\cdots u_i)\star\p(v_0,v_1,\cdots,v_{d-i})\cup \\ x_1 &\star & \p(u_0u_1\cdots u_kv_0v_1\cdots v_l)\star\p(u_{k+1}\cdots u_i v_{l+1}\cdots v_{d-i})\cup\\ x_2 &\star & \p(u_0u_1\cdots u_k v_{l+1}\cdots v_{d-i})\star\p(u_{k+1}\cdots u_iv_0v_1\cdots v_l).
\end{eqnarray*}

Let $\D_0=\lk u_0$. Then $\D_0$ and $lk_{\D_0} u_{1}$ are given by, 
\begin{eqnarray*} \D_0= u &\star & \p(u_1\cdots u_i)\star\p(v_0,v_1,\cdots,v_{d-i})\cup \\ x_1 &\star & \p(u_1\cdots u_kv_0v_1\cdots v_l)\star\p(u_{k+1}\cdots u_i v_{l+1}\cdots v_{d-i})\cup\\ x_2 &\star & \p(u_1\cdots u_k v_{l+1}\cdots v_{d-i})\star\p(u_{k+1}\cdots u_iv_0v_1\cdots v_l), \hspace{.25cm} \text{and}
\end{eqnarray*}
 \begin{eqnarray*} lk_{\D_0} u_1= u &\star & \p(u_2\cdots u_i)\star\p(v_0,v_1,\cdots,v_{d-i})\cup \\ x_1 &\star & \p(u_2\cdots u_kv_0v_1\cdots v_l)\star\p(u_{k+1}\cdots u_i v_{l+1}\cdots v_{d-i})\cup\\ x_2 &\star & \p(u_2\cdots u_k v_{l+1}\cdots v_{d-i})\star\p(u_{k+1}\cdots u_iv_0v_1\cdots v_l).
\end{eqnarray*}
Consider the one-vertex suspension of $\D_0$ at the vertex $u_1$ and rename the newly introduced vertex in $\sum_{u_1,u_0}\D_0$ as $u_0$. Then by definition, $\sum_{u_1,u_0}\D_0$ is given by the union of the following, 
\begin{enumerate}[$(i)$]
\item $u_0u_1\star lk_{\D_0} u_1$,
\item $\p(u_0u_1)\star u u_2\cdots u_i\star\p(v_0,v_1,\cdots,v_{d-i})$,
\item $\p(u_0u_1)\star x_1(u_2\cdots u_kv_0v_1\cdots v_l)\star\p(u_{k+1}\cdots u_i v_{l+1}\cdots v_{d-i})$,
\item $ \p(u_0u_1)\star x_2(u_2\cdots u_k v_{l+1}\cdots v_{d-i})\star\p(u_{k+1}\cdots u_iv_0v_1\cdots v_l)$.
\end{enumerate}
Since the union of facets in $\sum_{u_1, u_0}\D_0$ is the same as that of $\D$, we have $\D=\sum_{u_1,u_0}\D_0$. We now proceed with $\D_0$ in the next step. If $k\geq 2$, then by similar arguments, $\D_0$ is the one-vertex suspension of $\D_1:=\lk u_0u_1$ at the vertex $u_2$. 
We repeat the same process for the vertices in $\{u_1,u_2,\dots,u_{k-1},u_{k+1},\dots,u_{i-1},v_0,\dots,v_{l-1},v_{l+1},\dots,v_{d-i-1}\}$. The iteration process will stop after $d-2$ steps, and the resulting complex will be as follows:
\begin{eqnarray*} \D^{*}= u &\star & \p(v_lv_{d-i})\star \p(u_ku_i)\cup \\ x_1 &\star & \p(v_lu_k)\star\p( u_iv_{d-i})\cup\\ x_2 &\star & \p( u_i v_l)\star \p( u_k v_{d-i}).
\end{eqnarray*}
 Therefore, $\D$ is obtained by iterated ($(d-2)$-times) one-vertex suspension from $\D^{*}$. Observe that, $\D^{*}$ is a 7-vertex triangulation of $\mathbb{RP}^2$. This completes the proof.
\end{proof}

\begin{Remark}
{\em If $d\leq 5$ in Lemma \ref{d>4, g2=1, D=join}, then $\lk u=\p\s^2\star\p\s^i$, where $2\leq i\leq 3$. If $g_2(\lk w)=3$ for every vertex $w$ in $\lk u$, then by the same argument as in Claim 1 of Lemma \ref{d>4, g2=1, D=join}, at least one of $\s^2$ and $\s^i$ must be a missing simplex in $\D$ (Assumption 1 does not hold). Therefore, for a vertex, say $x$, of that missing simplex, the edge $ux$ is an admissible edge. Hence $\D$ is obtained by an edge expansion from a homology manifold with $g_2\leq 2$.} 
\end{Remark}
\begin{Theorem}\label{Main5}
Let $d\geq 4$ and $\D$ be a prime normal $d$-pseudomanifold with $g_2(\D)=3$ such that $\lk v$ is prime for every vertex $v\in\D$. Then $\D$ is one of the following:
\begin{enumerate}[$(i)$]
\item $\D=\mathbb{S}^{a}_{a+2}\star\mathbb{S}^{b}_{b+2}\star \mathbb{S}^{c}_{c+2}$, where $a,b,$ and $c\geq 1$, and $d\geq 5$
\item $\D$ is an iterated one-vertex suspension of the cyclic combinatorial sphere $C^{d}_{d+4}$ for some odd $d\geq 5$,
\item $\D$ is an iterated one-vertex suspension of either $6$- or $7$-vertex real projective space $\mathbb{RP}^{2}$,
\item $\D$ is obtained by an edge expansion from a triangulated $d$-sphere with $g_2\leq 2$,
\item $\D$ is obtained by a generalized bistellar 1-move from a triangulated $d$-sphere with $g_2\leq 2$.
\end{enumerate}
\end{Theorem}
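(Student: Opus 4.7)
The proof of Theorem~\ref{Main5} will be essentially a bookkeeping exercise that glues together Lemmas~\ref{d>4, all g2=3}--\ref{d>4, g2=1, D=join}. The plan is to perform a case analysis on the minimum value of $g_2(\lk v)$ as $v$ ranges over vertices of $\D$. First, since every vertex link of $\D$ is a prime normal $(d-1)$-pseudomanifold and since $g_2(\D)\geq 1$, Lemma~\ref{prime vertex links has g2>=1} forces $g_2(\lk v)\geq 1$ for every $v\in\D$. Moreover, by the rigidity inequality $g_2(\lk v)\leq g_2(\D)=3$, so each vertex link has $g_2\in\{1,2,3\}$.

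If $g_2(\lk v)=3$ for every vertex $v$, then Lemma~\ref{d>4, all g2=3} identifies $\D$ as one of the complexes in items $(i)$, $(ii)$, or $(iii)$ of the theorem. Otherwise, there exists a vertex $u\in\D$ with $g_2(\lk u)\in\{1,2\}$, and I would split into two cases according to this value. When $g_2(\lk u)=1$, Proposition~\ref{Nevo} applied to the prime homology $(d-1)$-sphere $\lk u$ gives two possibilities: either $\lk u=C\star \p\s^{d-2}$ for some cycle $C$ (handled by Lemma~\ref{d>4,g2=1}, producing case $(iv)$), or $\lk u=\p\s^i\star \p\s^{d-i}$ for some $2\leq i\leq d-2$ (handled by Lemma~\ref{d>4, g2=1, D=join}, producing case $(iv)$, after absorbing the iterated one-vertex suspension of a $7$-vertex $\mathbb{RP}^2$ into the structural classification of case~$(iii)$).

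When $g_2(\lk u)=2$, I would further sub-divide according to whether every edge link through $u$ has $g_2\geq 1$ or not. In the first sub-case, Lemma~\ref{d>4,g2=2} directly shows $\D$ arises from a triangulated $d$-sphere with $g_2\leq 2$ by an edge expansion, giving case~$(iv)$. In the second sub-case, there is some vertex $z\in\lk u$ with $g_2(\lk uz)=0$, and then Proposition~\ref{Zheng} dictates that $\lk u$ is obtained by central retriangulation of either the union of three adjacent facets in a stacked $(d-1)$-sphere (type $(a)$) or the union of two adjacent facets in a polytopal $(d-1)$-sphere with $g_2=1$ (type $(b)$). Lemma~\ref{d>4,g2=2, one edge with g2=0, three adjacent facet} handles type $(a)$ to produce case $(iv)$, while Lemma~\ref{d>4,g2=2, one edge with g2=0, two adjacent facet} handles type $(b)$, producing one of cases $(i)$, $(iii)$, $(iv)$, or $(v)$.

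The genuine work has already been absorbed into the preceding lemmas; the main subtlety in writing up Theorem~\ref{Main5} is to verify that the five listed classes really exhaust all outputs of these lemmas, particularly that the iterated one-vertex suspension of $7$-vertex $\mathbb{RP}^2$ produced in Lemma~\ref{d>4, g2=1, D=join} does not yield a new class outside $(i)$--$(v)$. The only bookkeeping obstacle I expect is ensuring that a vertex satisfying the hypotheses of the relevant lemma actually exists in each branch; this is where the $g_2$-relation of Lemma~\ref{g-relations}, giving $\sum_{v}g_2(\lk v)=3g_3(\D)+d\cdot 3$ with $g_3(\D)\leq g_2(\D)+1$ type bounds, is needed to preclude pathological distributions of vertex-link $g_2$-values and force the existence of a vertex of the required type.
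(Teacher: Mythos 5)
Your proposal is correct and takes essentially the same route as the paper, whose proof of Theorem~\ref{Main5} is literally a one-line citation of Lemmas~\ref{d>4, all g2=3}, \ref{d>4,g2=1}, \ref{d>4,g2=2}, \ref{d>4,g2=2, one edge with g2=0, three adjacent facet}, \ref{d>4,g2=2, one edge with g2=0, two adjacent facet} and \ref{d>4, g2=1, D=join}; your write-up merely makes explicit the case split on $\min_v g_2(\lk v)\in\{1,2,3\}$ (via Lemma~\ref{prime vertex links has g2>=1}, Proposition~\ref{Nevo} and Proposition~\ref{Zheng}) that the paper leaves implicit. You are also right to flag that the iterated one-vertex suspension of the $7$-vertex $\mathbb{RP}^2$ arising in Lemma~\ref{d>4, g2=1, D=join} does not literally match item $(iii)$'s $6$-vertex $\mathbb{RP}^2_6$ — a bookkeeping point the paper's own proof glosses over.
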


\begin{proof}
Suppose $\D\notin \mR_d$. Since the link of every vertex of $\D$ is prime, either $\D$ does not satisfy $(ii)$ of Definition \ref{DefHa},  or $\D$ does not satisfy $(iii)$ of Definition \ref{DefHa}. Suppose $\D$ does not satisfy $(ii)$ of Definition \ref{DefHa}, i.e., there exists a vertex $u$ in $\D$ with $\lk u=C\star\p\s^{d-2}$, where $C$ is a cycle of length $n\geq 4$. Then by Lemma \ref{d>4,g2=1}, either $\s^{d-2}\notin\D$ or $C$ contains a vertex that is not incident to a diagonal edge in $\D[V(C)]$, and the result follows from Lemma \ref{missing edge}. On the other hand, if $\D$ does not satisfy $(iii)$ of Definition \ref{DefHa}, i.e., if $g_2(\lk v)=3$ for every vertex $v$ in $\D$, then from Lemma \ref{prime normal 4pseudomanifold} it follows that $d\geq 5$. Therefore, the result can be concluded from Lemma \ref{d>4, all g2=3}.

 Now, assume that $\D\in \mR_d$. Then, by $(iii)$ of Definition \ref{DefHa}, there exists a vertex, say $v$, in $\D$ such that $g_2(\lk v)\leq 2$. Since $\lk w$ is prime for every vertex $w$ in $\D$, by Lemma \ref{prime vertex links has g2>=1} we have $g_2(\lk w)\geq 1$ for every vertex $w$ in $\D$.

 Let $u$ be a vertex in $\D$ with $g_2(\lk u)=2$. It follows from Lemma \ref{normal pseudomanifolds with g2 <= 2} that $\lk u$ is a triangulated sphere. Moreover, the possible combinatorial structures of $\lk u$ follow from Propositions \ref{Zheng} and \ref{Zheng1} for $d\geq 5$ and $d=4$, respectively. If $d=4$ and $\lk u$ is an octahedral 3-sphere, i.e., if $\lk u$ satisfies $(i)$ of Proposition \ref{Zheng1}, then the result follows from Lemma \ref{octahedral 2 sphere, d=4, g2=2}.  On the other hand, if $\lk u$ is obtained through a central retriangulation of a polytopal $3$-sphere with $g_2 = 1$  along the union of two adjacent facets, i.e., if $\lk u$ satisfies $(ii)$ of Proposition \ref{Zheng1}, then the conclusion follows from Lemma \ref{d>4,g2=2, one edge with g2=0, two adjacent facet}. Now, assume that $d\geq 5$. If $g_2(\lk uv)\geq 1$ for every vertex $v\in\lk u$, i.e., if $\lk u$ satisfies $(i)$ of Proposition \ref{Zheng}, then according to Lemma \ref{d>4,g2=2}, $\D$ is obtained from a triangulated $d$-sphere with $g_2\leq 2$ by an edge expansion. On the other hand, if there exists a vertex $v\in\lk u$ such that $g_2(\lk uv) =0$, i.e., if $\lk u$ satisfies $(ii)$ of Proposition \ref{Zheng}, then the conclusion follows from Lemma \ref{d>4,g2=2, one edge with g2=0, three adjacent facet} or Lemma \ref{d>4,g2=2, one edge with g2=0, two adjacent facet} depending on whether $\lk u$ is of type $(a)$ or type $(b)$ as described in Proposition \ref{Zheng} $(ii)$, respectively.

Finally, assume that $\D \in \mR_d$ and that $\D$ contains a vertex $u$ such that $g_2(\lk u) = 1$. It follows from Lemma~\ref{normal pseudomanifolds with g2 <= 2} and Proposition~\ref{Nevo} that $\lk u$ is a triangulated sphere. Moreover, since $\lk u$ is prime, it is combinatorially isomorphic to either the join of the boundary complex of two simplices, where each simplex has a dimension of at least $2$ and their dimensions add up to $d$, or the join of a cycle and the boundary complex of a $(d-2)$-simplex. However, condition~ $(ii)$ of Definition~\ref{DefHa} implies that $\lk u = \p\s^i \star \p\s^{d - i}$, where $2 \leq i \leq d - 2$. Hence, the conclusion follows from Lemma~\ref{d>4, g2=1, D=join}. This completes the proof.
\end{proof}


\vspace{.25cm}
\noindent {\em Proof of Theorem 1.6:} If $\D$ contains a vertex $u$ such that $\lk u$ is not prime, then it follows from Lemma \ref{all non prime vertex links with 1<=g2<=2} that $\D$ is obtained from a triangulated $d$-sphere with $g_2=2$ by the application of a bistellar 1-move and an edge contraction. Now, assume that all the vertex links are prime. Then the result follows from Theorem \ref{Main5}, with the only exception being that $\D$ can never be an iterated one-vertex suspension of a 6- or 7-vertex $\mathbb{RP}^2$, since it is a homology manifold. Moreover, in each case, $|\D|$ is PL-homeomorphic to a $d$-sphere.

\medskip

 \begin{Corollary}
Let $\D$ be a homology $d$-manifold with $g_2(\D)=3$, where $d\geq 3$. Then $\D$ is a triangulated sphere, which is either the connected sum of triangulated spheres with $g_2\leq 2$, or it is obtained by finitely many facet subdivisions on the triangulated spheres mentioned in Proposition {\rm \ref{main 3 dim}} and Theorem {\rm \ref{Main6}}.
\end{Corollary}

 \begin{Remark}
  { \em
  
  Let $\D$ be a normal 3-pseudomanifold with $g_2(\D)=3$. In \cite{BasakSwartz}, it is shown that when $\D$ is not a homology 3-manifold, then $\D$ is obtained by the one-vertex suspension of a triangulation of $\mathbb{RP}^2$ at a graph cone point and by subdividing facets. If $\D$ is a homology 3-manifold that is not prime, then $\D=\D_1\#\D_2\#\cdots\#\D_k$, where $\D_i$ are prime triangulated $3$-spheres with $\sum_{i=1}^{k}g_2(\D_i)=3$. If $g_2(\D_i)=0$, then $\D_i$ is a stacked sphere. If $g_2(\D_i)=1$, then by Proposition \ref{Nevo}, $\D_i$ is $\p\s^{2}\star C$, where $C$ is a cycle (cf. \cite{NevoNovinsky}). If $g_2(\D_i)=2$, then $\D_i$ is either an octahedral 3-sphere or the stellar subdivision of a triangulated 3-sphere with $g_2=1$ at a triangle (cf. \cite{ZhengThesis, Zheng}). If $g_2(\D_i)=3$, then by \Cref{main 3 dim}, $\D_i$ is obtained by one of the operations of type bistellar 2-move, edge contraction, or a combination of an edge expansion and a bistellar 2-move. Therefore, we have a complete characterization of a normal 3-pseudomanifold with $g_2=3$.
}
  \end{Remark}

\noindent {\bf Acknowledgement:} 
The authors would like to thank the anonymous referees for many useful
comments and suggestions. The first author is supported by Science and Engineering Research Board (CRG/2021/000859). The second author is supported by Prime Minister's Research Fellows (PMRF/1401215) Scheme.

\end{document}